\newcommand{\sech}{\mathop{\operator@font sech}}
\newcommand{\sign}{\mathop{\operator@font sign}}
\numberwithin{equation}{section}
\newcommand{\divv}{{\rm{div}}}
\newcommand{\Var}{\mathrm{var}}
\DeclareMathOperator*{\esssup}{ess\,sup}
\DeclareMathOperator*{\essinf}{ess\,inf}
\journalname{Journal of Mathematical Imaging and Vision}
\begin{document}
\title{Cross-diffusion systems for image processing: II. The nonlinear case}
\author{A. Ara\'ujo         \and
        S. Barbeiro \and
E. Cuesta\and
A. Dur\'an
}
\authorrunning{Ara\'ujo {\em et al.}}
\institute{A. Ara\'ujo \at
              CMUC, Department of Mathematics, University of Coimbra, Portugal\\
              \email{alma@mat.uc.pt}           
           \and
           S. Barbeiro \at
              CMUC, Department of Mathematics, University of Coimbra, Portugal\\
\email{silvia@mat.uc.pt}
\and
E. Cuesta \at
              Department of Applied Mathematics,  University of
Valladolid,  Spain\\
\email{eduardo@mat.uva.es}
\and
A. Dur\'an \at
              Department of Applied Mathematics,  University of
Valladolid,  Spain\\
\email{angel@mac.uva.es}
}
\date{Received: date / Accepted: date}

\maketitle
\begin{abstract}
In this paper we study the application of $2\times 2$ nonlinear cross-diffusion systems as mathematical models of image filtering. These are systems of two nonlinear, coupled partial differential equations of parabolic type. The nonlinearity and cross-diffusion character are provided by a {nondiagonal matrix of diffusion coefficients that} depends on the variables of the system. We prove the well-posedness of an initial-boundary-value problem with Neumann boundary conditions and uniformly positive definite cross-diffusion matrix. Under additional hypotheses on the coefficients, the models are shown to satisfy the scale-space properties of shift, contrast, average grey and translational invariances. The existence of Lyapunov functions and the asymptotic behaviour of the solutions are also studied. According to the choice of the cross-diffusion matrix (on the basis of the results on filtering with linear cross-diffusion, discussed by the authors in a companion paper, and the use of edge stopping functions
) the performance of the models is compared by computational means in a filtering problem. The numerical results reveal differences {in the evolution of the filtering as well as in the quality of edge detection given by one of the components of the system, in terms of the cross-diffusion matrix}.
%
%
\keywords{Cross-diffusion \and Complex diffusion \and Image {denoising}}
\end{abstract}

\section{Introduction}
\label{sec:sec1}
This paper is concerned with the use of nonlinear cross-diffusion systems for the mathematical modeling of image filtering. In this approach a grey-scale image is represented by a vector field ${\bf u}=(u,v)^{T}$ of two real-valued functions $u, v$ defined on some domain in $\mathbb{R}^{2}$. Additionally, an image restoration problem is modelled by an evolutionary process such that, from an initial distribution of a noisy image and with the time as a scale parameter, the restored image at any time satisfies an initial-boundary-value problem (IBVP) of a nonlinear system of partial differential equations (PDE) of cross-diffusion type, where the coupled evolution of the two components of the image and the nonlinearity are determined by a cross-diffusion coefficient matrix.

The use of cross-diffusion systems for modelling, especially in population dynamics, is well known, see e.~g. Galiano et al. \cite{GalianoGJ2001,GalianoGJ2003} and Ni \cite{Ni1998}  (along with references therein). To our knowledge, in the case of image processing, two previous proposals are related. The first one concerns the use of complex diffusion (Gilboa et al. \cite{GilboaSZ2004}), where the image is represented by a complex function and the filtering process is governed by a nonlinear PDE of diffusion type with a complex-valued diffusion coefficient. This equation can be written as a cross-diffusion system for the real and imaginary parts of the image. The application of complex diffusion to image filtering and edge-enhancing problems brings advantages based on the role of the imaginary part as edge detector in the linear case (the so-called small theta approximation) and its use, in the nonlinear case, instead of the size of the gradient of the image as the main variable to control the diffusion coefficient, Gilboa et al. \cite{GilboaZS2001,GilboaSZ2002,GilboaSZ2004}. 

A second reference on {nonlinear} cross-diffusion 
is the unpublished manuscript by Lorenz et al. \cite{LorenzBZ}, where the authors prove the existence of a global solution of a cross-diffusion problem, related to the complex diffusion approach proposed by Gilboa and collaborators. This already represents an advance with respect to the ill-posed Perona-Malik formulation, Perona \& Malik \cite{PeronaM1990} and Kinchenassamy \cite{Kichen1997}. Additionally, a better behaviour of cross-diffusion models with respect to the textures of the image is numerically suggested.

The present paper is a continuation of a companion work by the same authors devoted to the application of linear cross-diffusion processes to image filtering (Ara\'ujo et al. \cite{ABCD2016}). {The linear cross-diffusion is analyzed as a scale-space representation and an axiomatic, based on scale invariance, is built. Then those convolution kernels satisfying shift, rotational and scale invariance as well as recursivity (semigroup property) are characterized. The resulting filters are determined by a positive definite matrix, directing the diffusion, and a positive parameter which, as in the scalar case, Pauwels et al. \cite{PauwelsVFN1995}, delimits the locality property. Furthermore, since complex diffusion can be seen as a particular case of cross-diffusion, some properties of the former are generalized in the latter. More precisely, the use of one of the components of the cross-diffusion system as edge detector is investigated, extending the property of small theta approximation.}

The general purpose of the present paper is to continue the research on cross-diffusion models for image processing, by incorporating nonlinearity. The contributions of the paper are the following:
\begin{itemize}
\item We formalize nonlinear cross-diffusion IBVP as mathematical models for image processing, by proving the following theoretical results:
\begin{enumerate}
\item Well-posedness. By assuming that the coefficient matrix is uniformly positve definite and has globally Lipschitz and bounded entries, the IBVP of a nonlinear cross diffusion system of PDE with Neumann boundary conditions is  studied. The existence of  a unique weak solution, continuous dependence on the initial data and the existence of an extremum principle are proved. {Some of the arguments of Lorenz et al. \cite{LorenzBZ} for the system under study will be used and generalized here. Some extensions, not treated here, are the use of nonlocal operators and different types of boundary conditions in the PDE formulation.} 
\item The previous IBVP is also studied from the scale-space representation viewpoint, see e.~g. \'Alvarez et al. \cite{AlvarezGLM1993} and Lindeberg \cite{Linderberg2009}. Specifically, grey-level shift invariance, reverse contrast invariance and translational invariance are proved under additional assumptions on the diffusion coefficients.
\item The theoretical results are completed by analyzing the existence of Lyapunov functionals associated to the cross-diffusion problem, Weickert \cite{Weickert2}. The first result here is the decreasing of the energy (defined as the Euclidean norm of the solution) by cross-diffusion. The existence of Lyapunov functionals {different from this energy} depends on the relation between the cross-diffusion coefficient matrix and the function defining the functional. Finally, the solution is proved to evolve asymptotically to a constant image consisting of the average values of the components of the initial distribution.
\end{enumerate}
\item A numerical comparison of the performance of the models is made. The computational study is carried out on the basis of the results about the linear models, presented in Ara\'ujo et al. \cite{ABCD2016} and the numerical treatment of complex diffusion in Gilboa et al. \cite{GilboaSZ2004}. More precisely, the {performance of the experiments is based on the choices of the cross-diffusion coefficient matrix and the scheme of approximation to the continuous problem.}

As far as the coefficients are concerned, we select a matrix which combines linear cross-diffusion, including a constant positive definite matrix, with the use of standard edge detection functions, depending on the component of the image that plays the role of edge detector
from the generalized small theta approximation. The resulting form of the diffusion matrix  generalizes the complex diffusion approach, Gilboa et al. \cite{GilboaSZ2004}. Two strategies for the treatment of the edge detection functions are also implemented.

On the other hand, an adaptation to cross-diffusion systems of an explicit numerical method, considered and analyzed in Ara\'ujo et al. \cite{AraujoBS2012} and Bernardes et al. \cite{Bernardes:10}, for complex diffusion problems was used to perform the numerical experiments  in  filtering problems. The numerical results reveal differences in the behaviour of the models, according to the choice of the positive definite matrix and the edge stopping function. They are mainly concerned with a delay of the blurring effect (already observed in the linear case) and the influence of the generalized small theta approximation in the detection of the edges during the filtering problem.
\end{itemize}
The paper is structured according to these highlights. In Section \ref{sec:sec2} the IBVP of a cross-diffusion PDE with Neumann boundary conditions is introduced and the theoretical results of well-posedness, scale-space properties, Lyapunov functions and long time behaviour are proved. Section \ref{sec:sec3} is devoted to the computational study of the performance of the models. The main conclusions and future research are outlined in Section \ref{sec:sec4}.

The following notation will be used throughout the paper. A bounded (typically rectangular) domain in $\mathbb{R}^{2}$ will be denoted by $\Omega$, with boundary $\partial \Omega$ and where $\overline{\Omega}:=\Omega\cup \partial \Omega$. By ${\bf n}$ we denote the {outward} normal vector to $\partial \Omega$. For $p$ positive integer, $L^{p}(\Omega)$ denotes the normed space of $L^{p}$-functions on $\Omega$ with $||\cdot ||_{L^{p}}$ as the associated norm.
From the Sobolev space $H^{k}(\Omega)$ on $\Omega$ ($k$ is a nonnegative integer), where $H^{0}(\Omega)=L^{2}(\Omega)$ we define $X_{k}:=H^{k}(\Omega)\times H^{k}(\Omega)$ with norm denoted by
\begin{eqnarray*}
||{\bf u}||_{X_{k}}=\left(||u||_{{k}}^{2}+||v||_{{k}}^{2}\right)^{1/2},\quad {\bf u}=(u,v)^{T},
\end{eqnarray*}
where $||\cdot ||_{k}$ is the norm in $H^{k}(\Omega)$. On the other hand, the dual space of $H^{k}(\Omega)$ will be denoted by $\left(H^{k}(\Omega)\right)^{\prime}$; this is characterized as the completion of $L^{2}(\Omega)$ with respect to the norm, \cite{Adams1975},
\begin{eqnarray*}
||v||_{-k,2}=\sup_{u\in H^{k}(\Omega),||u||_{k}=1}|\langle u,v\rangle |,\quad
\langle u,v\rangle=\int_{\Omega}uvd\Omega.
\end{eqnarray*}
Additionally, $(X_{k})'$ will stand for $(H^{k}(\Omega))' \times (H^{k}(\Omega))'$.

For $T>0$, $Q_{T}=\Omega\times (0,T]$ will denote the set of points $({\bf x},t)$ with ${\bf x}\in\Omega, 0<t\leq T$ and $\overline{Q_{T}}:=\overline{\Omega}\times [0,T]$. The space of infinitely continuously differentiable {real-valued} functions in $\overline{\Omega}\times (0,T]$ will be denoted by $C^{\infty}\left(\overline{\Omega}\times (0,T]\right)$ as well as the space of $m-$th order continuously differentiable functions ${\bf u}:(0,T]\rightarrow X_{k}$ by $C^{m}(0,T,X_{k})$, $m,k$ nonnegative integers. Additionally, $L^{2}(0,T,H^{k})$  will stand for the normed space of functions $u:(0,T]\rightarrow H^{k}(\Omega)$ with associated norm
\begin{eqnarray*}
||u||_{L^{2}(0,T,H^{k})}=\left(\int_{0}^{T}||u(t)||_{k}^{2}dt\right)^{1/2}.
\end{eqnarray*}
We also denote by $L^{\infty}(0,T,H^{k})$ the normed space of functions $u:(0,T]\rightarrow H^{k}(\Omega)$ with norm
\begin{eqnarray*}
||u||_{L^{\infty}(0,T,H^{k})}=
\esssup_{t\in (0,T)}||u(t)||_{k},
\end{eqnarray*}
with $\esssup$ as the essential supremum. (The essential infimum will be denoted as $\essinf$.)

In Section \ref{sec:sec2} we will make use of the convolution operator
\begin{eqnarray}
(g\ast f)({\bf x})=\int_{\mathbb{R}^{2}}f({\bf x}-{\bf y})g({\bf y})d{\bf y},\label{conv}
\end{eqnarray}
for $g\in L^{1}(\mathbb{R}^{2}), f\in L^{2}(\mathbb{R}^{2})$ and the Fourier transform
\begin{eqnarray}
\widehat{f}({\bf \xi})=\int_{\mathbb{R}^{2}}f({\bf x})e^{-i{\bf \xi}\cdot{\bf x}}d{\bf x},\quad f\in L^{2}(\mathbb{R}^{2}), \quad \xi\in\mathbb{R}^{2}, \label{fourt}
\end{eqnarray}
where $\cdot$ denotes the Euclidean inner product in $\mathbb{R}^{2}$ with the norm represented by $| \cdot |$. In order to define (\ref{conv}), (\ref{fourt}) when $f\in L^{2}(\Omega)$, a continuous extension of $f$ in $\mathbb{R}^{2}$ will be {considered} and denoted by $\widetilde{f}$.

Finally, ${\divv}, \nabla$ will stand, respectively, for the divergence and gradient operators. Concerning the gradient, if ${\bf u}=(u,v)^{T}$ then $J{\bf u}$ stands for the Jacobian matrix of ${\bf u}$, ${\bf u}_{x}=(u_{x},v_{x})^{T}, {\bf u}_{y}=(u_{y},v_{y})^{T}$ and 
\begin{eqnarray*}
||J{\bf u}||_{X_{0}}:=\left(||{\bf u}_{x}||_{X_{0}}^{2}+||{\bf u}_{y}||_{X_{0}}^{2}\right)^{1/2}.
\end{eqnarray*}

Additional notation for the numerical experiments will be specified in Section \ref{sec:sec3}.

\section{Nonlinear cross-diffusion model}
\label{sec:sec2}
We consider the following IBVP of cross-diffusion for ${\bf u}=(u,v)^{T}$,
\begin{eqnarray}
\frac{\partial u}{\partial t}({\bf x},t)&=&{\divv}\left(D_{11}({\bf u}({\bf x},t))\nabla u({\bf x},t)\right.\nonumber\\
&&\left. +D_{12}({\bf u}({\bf x},t))\nabla v({\bf x},t)\right),\label{cd1}\\
\frac{\partial v}{\partial t}({\bf x},t)&=&{\divv}\left(D_{21}({\bf u}({\bf x},t))\nabla u({\bf x},t)\right.\nonumber\\
&&\left.+D_{22}({\bf u}({\bf x},t))\nabla v({\bf x},t)\right),\quad ({\bf x},t)\in Q_{T},\nonumber
\end{eqnarray}
with the initial data given by
\begin{eqnarray}
u({\bf x},0)=u_{0}({\bf x}), \quad v({\bf x},0)=v_{0}({\bf x}),\quad {\bf x}\in\Omega,\label{cd1a}
\end{eqnarray}
and Neumann boundary conditions in $\partial \Omega\times [0,T],$
\begin{eqnarray}
&& \langle D_{11}({\bf u})\nabla u+D_{12}({\bf u})\nabla v,n\rangle=0,\nonumber\\
&&\langle D_{21}({\bf u})\nabla u+D_{22}({\bf u})\nabla v,n\rangle=0.\label{cd1b}
\end{eqnarray}
In (\ref{cd1}), (\ref{cd1b}), 
the scalar functions $D_{ij}$, $i,j=1,2$, are the entries of a 
cross-diffusion {$2\times 2$} matrix operator 
$${\bf u}\mapsto D({\bf u}):\overline{Q_{T}}\rightarrow {M_{2\times 2}(\mathbb{R})},$$ with, for $ ({\bf x},t)\in \overline{Q_{T}},$
\begin{eqnarray*}
D({\bf u})({\bf x},t)=D({\bf u}({\bf x},t))=\begin{pmatrix}D_{11}({\bf u}({\bf x},t))&D_{12}({\bf u}({\bf x},t))\\
D_{21}({\bf u}({\bf x},t))&D_{22}({\bf u}({\bf x},t))\end{pmatrix},
\end{eqnarray*}
and which satisfies the following hypotheses:
\begin{itemize}
\item[(H1)] There exists $\alpha>0$ such that for each ${\bf u}:\overline{Q_{T}}\rightarrow\mathbb{R}^{2}$ 
\begin{eqnarray}
{\bf \xi}^{T}D({\bf u}({\bf x},t)){\bf \xi}\geq \alpha |\xi|^{2},\quad \xi\in\mathbb{R}^{2}, ({\bf x},t)\in \overline{Q_{T}}.\label{cd2}
\end{eqnarray}
\item[(H2)] There exists $L>0$ such that for ${\bf u},{\bf v}:\overline{Q_{T}}\rightarrow\mathbb{R}^{2}, ({\bf x},t)\in \overline{Q_{T}}, i,j=1,2,$
\begin{eqnarray*}
|D_{ij}({\bf v}({\bf x},t))-D_{ij}({\bf u}({\bf x},t))|\leq L |{\bf v}({\bf x},t)-{\bf u}({\bf x},t)|.
\end{eqnarray*}
\item[(H3)] There exists $M>0$ such that for each ${\bf u}:\overline{Q_{T}}\rightarrow\mathbb{R}^{2}$
\begin{eqnarray*}
|D_{ij}({\bf u}({\bf x},t))|\leq M,\quad ({\bf x},t)\in \overline{Q_{T}}, i,j=1,2.
\end{eqnarray*}
\end{itemize}
Conditions (H1)-(H3) will also be complemented with other assumptions, required by scale-space properties, see Section \ref{sec:sec22}.

In what follows the weak
formulation of (\ref{cd1})-(\ref{cd1b}) will be considered. This consists of finding ${\bf u}=(u,v)^{T}:(0,T]\longrightarrow X_{1}$ satisfying, for any $t\in (0,T]$
\begin{eqnarray}
&&\int_{\Omega} \left((\partial_{t}u)w_{1}+(\partial_{t}v)w_{2}\right)d\Omega\nonumber\\
&&+\int_{\Omega}\mathrm {tr}\left((J{\bf w})^{T}D({\bf u}) (J{\bf u})\right)d\Omega=0,\label{cd3}
\end{eqnarray}
for all ${\bf w}=(w_{1},w_{2})^{T}\in X_{1}$ and where $\mathrm {tr}$ denotes the trace of the matrix. 


\subsection{Well-posedness}
\label{sec:sec21}
This section is devoted to the study of well-posedness of (\ref{cd1})-(\ref{cd1b}). More precisely, we prove the existence of a unique solution of (\ref{cd3}), regularity, continuous dependence on the initial data and finally an extremum principle. The proofs follow standard arguments, see Catt\'e et al. \cite{CatteLMC1992}, Weickert \cite{Weickert2} (see also Galiano et al. \cite{GalianoGJ2001}  and references therein).
We first consider a related linear problem and prove a maximum-minimum principle as well as estimates of the solution in different norms. These results are crucial to prove the existence of the solution for the nonlinear case by using the Schauder fixed-point theorem, Brezis \cite{Brezis2011}. The same arguments as in Catt\'e et al. \cite{CatteLMC1992} and Weickert \cite{Weickert2}  apply to prove the uniqueness, as well as regularity and continuous dependence on the initial data. Finally, the proof of the extremum principle for the linear problem can be adapted to obtain the corresponding result for (\ref{cd1})-(\ref{cd1b}).

\begin{theorem}
\label{th1}
Let us assume that (H1)-(H3) hold and let ${\bf u}_{0}=(u_{0},v_{0})^{T}\in X_{1}$. Then (\ref{cd3}) admits a unique solution ${\bf u}\in C(0,T,X_{0})\cap L^{2}(0,T,X_{1})$ that depends continuously on the initial data. Furthermore, if $D$ is in $C^{\infty}(\mathbb{R}^{2},M_{2\times 2}(\mathbb{R}))$ then ${\bf u}$ is a strong solution of (\ref{cd1})-(\ref{cd1b}) with ${\bf u}\in C^{\infty}(\overline{\Omega}\times (0,T])$. 
\end{theorem}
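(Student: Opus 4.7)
The plan is to adapt the Schauder fixed-point strategy used by Catt\'e et al. and Weickert for the scalar case to the present coupled $2\times 2$ system. For a prescribed ${\bf w}\in L^{2}(0,T,X_{0})$, I freeze the coefficient matrix as $D({\bf w})$ and consider the linear, time-dependent variable-coefficient parabolic system obtained by replacing $D({\bf u})$ with $D({\bf w})$ in (\ref{cd3}). Hypotheses (H1) and (H3) make the associated bilinear form coercive and bounded, so the linearized problem has a unique weak solution ${\bf u}=\mathcal{F}({\bf w})\in L^{2}(0,T,X_{1})\cap C(0,T,X_{0})$ with $\partial_{t}{\bf u}\in L^{2}(0,T,(X_{1})')$, constructed by Galerkin approximation in the Lions--Magenes framework. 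Testing (\ref{cd3}) with ${\bf u}$ and using (H1) produces the energy estimate
\[
\|{\bf u}(t)\|_{X_{0}}^{2}+2\alpha\int_{0}^{t}\|J{\bf u}(s)\|_{X_{0}}^{2}\,ds\le\|{\bf u}_{0}\|_{X_{0}}^{2},
\]
uniform in ${\bf w}$, and a maximum--minimum principle for the linear problem follows by testing with the truncations $(u-M)^{+}$ and $(m-u)^{-}$ as in Weickert, with the off-diagonal coupling absorbed through the positive-definiteness (H1).

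Schauder's theorem is then applied to $\mathcal{F}$ on the closed convex set
\[
K=\{{\bf w}\in L^{2}(0,T,X_{0}):\|{\bf w}\|_{L^{2}(0,T,X_{0})}\le R\},
\]
with $R$ read off from the a priori bound so that $\mathcal{F}(K)\subset K$. Compactness of $\mathcal{F}$ is a direct consequence of Aubin--Lions, since the $L^{2}(0,T,X_{1})$-bound together with the $L^{2}(0,T,(X_{1})')$-bound on $\partial_{t}\mathcal{F}({\bf w})$ (read off from the equation and (H3)) yields relative compactness in $L^{2}(0,T,X_{0})$. Continuity uses (H2) and (H3): if ${\bf w}_{n}\to{\bf w}$ strongly in $L^{2}(0,T,X_{0})$, then up to a subsequence ${\bf w}_{n}\to{\bf w}$ almost everywhere, and Lipschitz continuity together with the uniform bound $M$ gives, by dominated convergence, $D({\bf w}_{n})\to D({\bf w})$ strongly in every $L^{p}(Q_{T})$; combined with ${\bf u}_{n}=\mathcal{F}({\bf w}_{n})\rightharpoonup{\bf u}$ weakly in $L^{2}(0,T,X_{1})$, this suffices to pass to the limit in the bilinear diffusion term against smooth test functions, and Schauder delivers a fixed point, which is a weak solution of (\ref{cd3}).

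For uniqueness and continuous dependence I would subtract the equations for two solutions ${\bf u}_{1},{\bf u}_{2}$, split
\[
D({\bf u}_{1})J{\bf u}_{1}-D({\bf u}_{2})J{\bf u}_{2}=D({\bf u}_{1})J({\bf u}_{1}-{\bf u}_{2})+(D({\bf u}_{1})-D({\bf u}_{2}))J{\bf u}_{2},
\]
and test with ${\bf u}_{1}-{\bf u}_{2}$: (H1) absorbs the first piece, (H2) combined with the $L^{2}(0,T,X_{1})$-bound on ${\bf u}_{2}$ and a Sobolev embedding controls the second, and a Gr\"onwall argument closes the estimate, with the initial-data difference supplying continuous dependence. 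When $D\in C^{\infty}$, $C^{\infty}$ regularity is reached by the standard parabolic bootstrap: difference quotients in space raise $H^{k}$ regularity using smoothness and boundedness of $D$ and its derivatives, Schauder-type estimates for parabolic systems give H\"older continuity up to the boundary compatible with the Neumann data, and time regularity is read off from the equation; the extremum principle is inherited from the linear problem applied to the fixed point. The main technical hurdle is the passage to the limit in the nonlinear diffusion term in the Schauder step: strong convergence of $D({\bf w}_{n})$ in every $L^{p}$ ($p<\infty$) must be paired with weak $L^{2}$ convergence of $J{\bf u}_{n}$ to recover convergence of the product, and the coupling through $D_{12},D_{21}$ makes the componentwise extremum principle more delicate than in the scalar Perona--Malik setting, so the off-diagonal terms require careful control via (H1) rather than a direct monotone argument.
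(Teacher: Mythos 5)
Your proposal is correct and follows essentially the same route as the paper: freeze the coefficients to get a uniformly parabolic linear system, derive the $L^{\infty}(0,T,X_{0})$, $L^{2}(0,T,X_{1})$ and dual-norm estimates on $\partial_{t}{\bf u}$, apply Schauder's fixed-point theorem, prove uniqueness and continuous dependence by the same flux splitting $D({\bf u}_{1})J{\bf u}_{1}-D({\bf u}_{2})J{\bf u}_{2}=D({\bf u}_{1})J({\bf u}_{1}-{\bf u}_{2})+(D({\bf u}_{1})-D({\bf u}_{2}))J{\bf u}_{2}$ plus Gr\"onwall, and bootstrap for $C^{\infty}$ regularity. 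The only (immaterial) difference is that you invoke Schauder with a compact map via Aubin--Lions in the strong $L^{2}(0,T,X_{0})$ topology, whereas the paper works with a weakly continuous map on the weakly compact convex set cut out by the a priori estimates.
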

\begin{proof}
We first define
\begin{eqnarray*}
W(0,T)&=&\{w\in L^{2}(0,T,H^{1}(\Omega)):\\
&&\frac{dw}{dt}\in L^{2}(0,T,(H^{1}(\Omega))') \} ,
\end{eqnarray*}
with the graph norm.

\subsubsection*{Existence}
\label{sec:sec211}
In order to study the existence of solution of (\ref{cd3}) we first consider, for 
${\bf U}=(U,V)^{T}$, with  
\begin{eqnarray*}
U,V\in W(0,T)\bigcap L^{\infty}(0,T,L^{2}(\Omega)),
\end{eqnarray*} the following linear IBVP in $Q_{T}$:
\begin{eqnarray}\label{cd7}
\frac{\partial u}{\partial t}({\bf x},t)&=&{\divv}\left(D_{11}({\bf U}({\bf x},t))\nabla u({\bf x},t)\right.\nonumber\\
&&\left.+D_{12}({\bf U}({\bf x},t))\nabla v({\bf x},t)\right),\nonumber\\
\frac{\partial v}{\partial t}({\bf x},t)&=&{\divv}\left(D_{21}({\bf U}({\bf x},t))\nabla u({\bf x},t)\right.\nonumber\\
&&\left.+D_{22}({\bf U}({\bf x},t))\nabla v({\bf x},t)\right),\nonumber\\
u({\bf x},0)&=&u_{0}({\bf x}), v({\bf x},0)=v_{0}({\bf x}),\quad {\bf x}\in\Omega,
\label{cd7a}
\end{eqnarray}
with Neumann boundary conditions in $\partial \Omega\times [0,T]$
\begin{eqnarray}
&& \langle D_{11}({\bf U})\nabla u+D_{12}({\bf U})\nabla v,n\rangle=0,\nonumber\\
&&\langle D_{21}({\bf U})\nabla u+D_{22}({\bf U})\nabla v,n\rangle=0.\label{cd7b}
\end{eqnarray}

Since $D({\bf U})=D(U,V)$ is uniformly positive definite (hypothesis (H1)), then, e.~g. Ladyzenskaya et al. \cite{LSU1968}, there is a unique weak solution of (\ref{cd7a}), (\ref{cd7b}), ${\bf u}(U,V)=(U_{1}(U,V),U_{2}(U,V))$, with $$U_{1},U_{2} \in W(0,T)\bigcap L^{\infty}(0,T,L^{2}(\Omega)).$$ We now establish some estimates for this solution in different norms, Lorenz et al.  \cite{LorenzBZ}. Consider first the weak formulation of (\ref{cd7}):
find ${\bf u}(U,V)=(U_{1}(U,V),U_{2}(U,V))$ in $L^{2}(0,T,X_{1})$ satisfying
\begin{eqnarray}
&&\int_{\Omega} \left((\partial_{t}U_{1})v_{1}+(\partial_{t}U_{2})v_{2}\right)d\Omega\nonumber\\
&&+\int_{\Omega}\mathrm{tr}\left((J{\bf v})^{T}D(U,V)(J{\bf u})\right)d\Omega=0,\label{ncd6}
\end{eqnarray}
for every ${\bf v}=(v_{1},v_{2})\in X_{1}$ and all $0\leq t\leq T$. We take the test functions $v_{1}=(U_{1}-b_1)_{+}, v_{2}=(U_{2}-b_{2})_{+}$ for some $b_1,b_2>0$ that will be specified later and where $f_{+}=\max \{f,0\}$ (Lorenz et al. \cite{LorenzBZ}, Weickert \cite{Weickert2}). Then (\ref{ncd6}) becomes
\begin{eqnarray*}
&&\frac{1}{2}\int_{\Omega} \left(\partial_{t}(U_{1}-b_1)_{+}^{2}+\partial_{t}(U_{2}-b_2)_{+}^{2}\right)d\Omega\\
&&+\int_{U_{1}>b_1,U_{2}>b_2}\mathrm{tr}\left((J{\bf u})^{T}D(U,V)(J{\bf u})\right)d\Omega=0.
\end{eqnarray*}
Then (H1) implies that
\begin{eqnarray*}
\frac{d}{dt}\int_{\Omega} \left((U_{1}-b_1)_{+}^{2}+(U_{2}-b_2)_{+}^{2}\right)d\Omega\leq 0.
\end{eqnarray*}
Thus integrating between $0$ and $t$, for any $0\leq t\leq T$, we have
\begin{eqnarray}
&&\int_{\Omega} \left((U_{1}(t)-b_1)_{+}^{2}+(U_{2}(t)-b_2)_{+}^{2}\right)d\Omega\nonumber\\
&&\leq
\int_{\Omega} \left((U_{1}(0)-b_1)_{+}^{2}+(U_{2}(0)-b_2)_{+}^{2}\right)d\Omega.\label{ncd7}
\end{eqnarray}
Now we take $b_1,b_2$ such that the integral on the right hand side of (\ref{ncd7}) becomes zero. If we assume that $U_{1}(0),U_{2}(0)\in L^{\infty}(\Omega)$ and define
$$ b_1=||U_{1}(0)||_{L^{\infty}(\Omega)}, \quad
b_2=||U_{2}(0)||_{L^{\infty}(\Omega)},
$$ then (\ref{ncd7}) implies
\begin{eqnarray*}
\int_{\Omega} \left((U_{1}(t)-b_1)_{+}^{2}+(U_{2}(t)-b_2)_{+}^{2}\right)d\Omega\leq
0
\end{eqnarray*}
and consequently
$(U_{1}(t)-b_1)_{+}=(U_{2}(t)-b_2)_{+}=0$ for $0\leq t\leq T$, that is
\begin{eqnarray}
&&U_{1}(x,t)\leq b_1=||U_{1}(0)||_{L^{\infty}(\Omega)},\nonumber\\
&&U_{2}(x,t)\leq b_2=||U_{2}(0)||_{L^{\infty}(\Omega)}.\label{ncd8}
\end{eqnarray}

Similarly, taking $v_{1}=(U_{1}-a_1)_{-}, v_{2}=(U_{2}-a_2)_{-}$ for some $a_1,a_2>0$ and where $f_{-}=\min \{f,0\}$, the same argument leads to
\begin{eqnarray*}
&&\int_{\Omega} \left((U_{1}(t)-a_1)_{-}^{2}+(U_{2}(t)-a_2)_{-}^{2}\right)d\Omega\\
&&\leq
\int_{\Omega} \left((U_{1}(0)-a_1)_{-}^{2}+(U_{2}(0)-a_2)_{-}^{2}\right)d\Omega.
\end{eqnarray*}
If we now {define}
$$ a_1=\essinf U_{1}(0), \quad
a_2=\essinf U_{2}(0),
$$
then
\begin{eqnarray*}
\int_{\Omega} \left((U_{1}(t)-a_1)_{-}^{2}+(U_{2}(t)-a_2)_{-}^{2}\right)d\Omega\leq
0
\end{eqnarray*}
and therefore
$(U_{1}(t)-a_1)_{-}=(U_{2}(t)-a_2)_{-}=0$ for $0\leq t\leq T$, that is
\begin{eqnarray}
U_{1}(x,t)\geq \essinf U_{1}(0),\; U_{2}(x,t)\geq \essinf U_{2}(0).\label{ncd9}
\end{eqnarray}
In particular, if $U_{1}(0),U_{2}(0)\geq 0$ then $U_{1}(x,t),U_{2}(x,t)\geq 0$ for all $(x,t)\in Q_{T}$.

\bigskip
A second estimate for the solution of the linear problem (\ref{cd7}) is now obtained from the functional of energy
$$
E_{L}(t)=\frac{1}{2}\int_{\Omega}\mathrm{tr}\left((J{\bf u})^{T}D(U,V)(J{\bf u})\right)d\Omega.
$$
Note that if in the weak formulation (\ref{ncd6}) we take ${\bf v}=(U_{1},U_{2})^{T}$ then
\begin{eqnarray*}
\frac{d}{dt}E_{L}(t)+\int_{\Omega}(\nabla U_{1} \nabla U_{2})D(U,V)(\nabla U_{1} \nabla U_{2})^{T}d\Omega=0,
\end{eqnarray*}
which implies
$$
\frac{d}{dt}E_{L}(t)\leq 0,
$$ that is $E_{L}(t)$ decreases. This leads to the $L^{\infty}$ estimates
\begin{eqnarray}
&&||U_{1}||_{L^{\infty}(0,T,L^{2}(\Omega))}\leq ||U_{1}(0)||_{L^{2}(\Omega)},\nonumber\\
&& ||U_{2}||_{L^{\infty}(0,T,L^{2}(\Omega))}\leq ||U_{2}(0)||_{L^{2}(\Omega)}.\label{ncd10}
\end{eqnarray}

We now search for estimates of $U_{1}(t), U_{2}(t)$ as functions in $H^{1}(\Omega)$ (and also of $\displaystyle\frac{d}{dt}U_{1}(t), \displaystyle\frac{d}{dt}U_{2}(t)$ as functions in $(H^{1}(\Omega))'$). Note first that from the previous argument we have, for $t\in [0,T]$,
\begin{eqnarray*}
&&\int_{\Omega}\left(U_{1}({\bf x},t)^{2}+U_{2}({\bf x},t)^{2}\right)d\Omega\\
&&\leq
\int_{\Omega}\left(U_{1}({\bf x},0)^{2}+U_{2}({\bf x},0)^{2}\right)d\Omega,
\end{eqnarray*}
and also
\begin{eqnarray}
&&\frac{d}{dt}\frac{1}{2}\int_{\Omega}\left(U_{1}({\bf x},t)^{2}+U_{2}({\bf x},t)^{2}\right)d\Omega\nonumber\\
&&+\alpha \int_{\Omega}\left(|\nabla U_{1}({\bf x},t)|^{2}+|\nabla U_{2}({\bf x},t)|^{2}\right)d\Omega \leq 0.\label{ncd11}
\end{eqnarray}
Then (\ref{ncd11}) implies that for any $t\in [0,T]$
\begin{eqnarray*}
&&\frac{1}{2}\int_{\Omega}\left(U_{1}({\bf x},t)^{2}+U_{2}({\bf x},t)^{2}\right)d\Omega\\
&&+\alpha \int_{0}^{t}  \int_{\Omega}\left(|\nabla U_{1}({\bf x},s)|^{2}+|\nabla U_{2}({\bf x},s)|^{2}\right)d\Omega ds\\
&&\leq \frac{1}{2}\int_{\Omega}\left(U_{1}({\bf x},0)^{2}+U_{2}({\bf x},0)^{2}\right)d\Omega.
\end{eqnarray*}
Therefore
\begin{eqnarray*}
&&\int_{0}^{T}\frac{1}{2}\int_{\Omega}\left(U_{1}({\bf x},t)^{2}+U_{2}({\bf x},t)^{2}\right)d\Omega dt\\
&&+\alpha \int_{0}^{T}  \int_{\Omega}\left(|\nabla U_{1}({\bf x},t)|^{2}+|\nabla U_{2}({\bf x},t)|^{2}\right)d\Omega dt\\
&&=\int_{0}^{T} E_{L}(t)dt\\
&&+\alpha \int_{0}^{T}  \int_{\Omega}\left(|\nabla U_{1}({\bf x},t)|^{2}+|\nabla U_{2}({\bf x},t)|^{2}\right)d\Omega dt\\
&&=\int_{0}^{T} E_{L}(t)dt-E_{L}(T)+E_{L}(T)\\
&&+\alpha \int_{0}^{T}  \int_{\Omega}\left(|\nabla U_{1}({\bf x},t)|^{2}+|\nabla U_{2}({\bf x},t)|^{2}\right)d\Omega dt\\
&&\leq \int_{0}^{T} E_{L}(t)dt-E_{L}(T)+E_{L}(0)\leq (T+1)E_{L}(0).
\end{eqnarray*}
Thus, if ${\bf U}_{0}=(U_{1}(0),U_{2}(0))^{T}$ then there exists a constant $C_{1}=C_{1}(\alpha,{\bf U}_{0},T)$ such that
\begin{eqnarray}
&&||U_{1}||_{L^{2}(0,T,H^{1}(\Omega))}\leq C_{1},\nonumber\\
&& ||U_{2}||_{L^{2}(0,T,H^{1}(\Omega))}\leq C_{1}.\label{ncd12}
\end{eqnarray}

On the other hand, if $||{\bf v}||_{L^{2}(0,T,X_{1})}=1$, the weak formulation (\ref{ncd6}), assumption (H3)  and Cauchy-Schwarz inequality imply that
\begin{eqnarray*}
&&\left|\int_{0}^{T}\int_{\Omega}\left((\partial_{t}U_{1})v_{1}+(\partial_{t}U_{2})v_{2}\right)d\Omega dt\right|\\
&&=\left| \int_{0}^{T}\left(\int_{\Omega}\mathrm{tr}\left((J{\bf v})^{T}D(U,V)(J{\bf u})\right)d\Omega\right)dt\right|\\
&&\leq \int_{0}^{T} M||\nabla {\bf v}(t)||_{X_{0}}||\nabla {\bf u}(t)||_{X_{0}}dt\\
&&\leq \int_{0}^{T} M||{\bf v}(t)||_{X_{1}}|| {\bf u}(t)||_{X_{1}}dt\\
&&\leq M||{\bf v}||_{L^{2}(0,T,X_{1})}|| {\bf u}||_{L^{2}(0,T,X_{1})}=M|| {\bf u}||_{L^{2}(0,T,X_{1})}.
\end{eqnarray*}
Therefore, this and (\ref{ncd12}) lead to
\begin{eqnarray}
||\frac{d}{dt} {\bf u}||_{L^{2}(0,T,(X_{1})')}\leq M || {\bf u}||_{L^{2}(0,T,X_{1})}\leq MC_{1}.\label{ncd13}
\end{eqnarray}
The existence of a solution of (\ref{cd3}) is now derived, making use of the estimates 
(\ref{ncd10}), (\ref{ncd12}) and (\ref{ncd13}) and by using the Schauder fixed-point theorem, Brezis \cite{Brezis2011}. (Analogous arguments were used in Catt\'e et al. \cite{CatteLMC1992}, see also Weickert \cite{Weickert2}.) We first assume that ${\bf u}_{0}=(u_{0},v_{0})^{T}\in X_{0}$ in (\ref{cd1a}). Consider the following subset of $W(0,T)^{2}:=W(0,T)\times W(0,T)$:
\begin{eqnarray*}
K&=&\{{\bf w}=(w_{1},w_{2})^{T}\in W(0,T)^{2}: {\bf w} \,\, \mbox{satisfies}\\
&&\,\, \mbox{(\ref{ncd10}), (\ref{ncd12}) and (\ref{ncd13})}\,\,\mbox{with}\,\, {\bf w}(0)={\bf u}_{0}\},
\end{eqnarray*}
and the mapping $T:K\longrightarrow W(0,T)^{2}$ such that $T({\bf w}):={\bf u}({\bf w})$ is the (weak) solution of (\ref{cd7}) with $(U,V)^{T}={\bf w}$.

It is not hard to see that $K$ is a nonempty, convex subset of $W(0,T)^{2}$. Our goal is to apply the Schauder fixed point theorem to the operator $T$ in the weak topology. To this end, we need to prove that:
\begin{itemize}
\item[(1)] $T(K)\subset K$.
\item[(2)] $K$ is a weakly compact subset of $W(0,T)^{2}$.
\item[(3)] $T$ is weakly continuous.
\end{itemize}
Observe that by construction (1) is satisfied. In order to prove (2), consider a sequence $\{{\bf w}_{n}\}_{n}\subset K$ and $t\in [0,T]$. Since $K$ is a bounded set, then $$\{{\bf w}_{n}(t)\}_{n}, \{\frac{d}{dt}{\bf w}_{n}(t)\}_{n}$$ are uniformly bounded in $X_{1}$ which implies the existence of a subsequence (denoted again by $\{{\bf w}_{n}(t)\}_{n},$  $\{\frac{d}{dt}{\bf w}_{n}(t)\}_{n}$) and ${\bf \varphi}(t),{\bf \psi}(t)\in X_{1}$ such that
$$
{\bf w}_{n}(t)\rightarrow {\bf \varphi}(t),\quad \frac{d}{dt}{\bf w}_{n}(t)\rightarrow {\bf \psi}(t),
$$ weakly in $X_{1}$ and for $0\leq t\leq T$. On the other hand, since $W(0,T)\subset L^{2}(0,T,L^{2}(\Omega))$ and the embedding is compact,  Catt\'e et al. \cite{CatteLMC1992}, there exists ${\bf w}\in L^{2}(0,T,X_{0})$ such that $||{\bf w}_{n}-{\bf w}||_{L^{2}(0,T,X_{0})}\rightarrow 0$ for some subsequence $\{{\bf w}_{n}\}_{n}$. Consequently ${\bf w}={\bf \varphi}\in L^{2}(0,T,X_{1})$. Actually, ${\bf \psi}=\frac{d}{dt}{\bf \varphi}$ and then $K$ is weakly compact in $W(0,T)^{2}$.

Finally, consider a sequence $\{{\bf w}_{n}\}_{n}\subset K$ which converges weakly to some ${\bf w}\in K$. Let ${\bf u}_{n}=T({\bf w}_{n})$. In order to prove property (3), we have to see that ${\bf u}_{n}$ converges weakly to ${\bf u}=T({\bf w})$. Here the proof is similar to that of Catt\'e et al.  \cite{CatteLMC1992}. Previous arguments applied to ${\bf u}_{n}$ and property (2) establish the existence of a subsequence  $\{{\bf u}_{n}\}_{n}$ and ${\bf \phi}\in L^{2}(0,T,X_{1})$ satisfying
\begin{itemize}
\item[(i)] ${\bf u}_{n}\rightarrow {\bf \phi}$ weakly in $L^{2}(0,T,X_{1})$;
\item[(ii)] $\frac{d}{dt}{\bf u}_{n}\rightarrow \frac{d}{dt}{\bf \phi}$ weakly in $L^{2}(0,T,(X_{1})')$;
\item[(iii)] ${\bf u}_{n}\rightarrow {\bf \phi}$ in $L^{2}(0,T,X_{0})$ and almost everywhere on $\Omega\times [0,T]$, (e.~g. Brezis \cite{Brezis2011}, Theorem 4.9);
\item[(iv)] ${\bf w}_{n}\rightarrow {\bf w}$  in $L^{2}(0,T,X_{0})$ and almost everywhere on $\Omega\times [0,T]$.
\end{itemize}
These convergence properties imply two additional ones:
\begin{itemize}
\item[(v)] ${\bf u}_{n}(0)\rightarrow {\bf \phi}(0)$ in $(X_{1})'$;
\item[(vi)] $\nabla {\bf u}_{n}\rightarrow \nabla {\bf \phi}$ weakly in $L^{2}(0,T,X_{0})$.
\end{itemize}
Now, note that due to (H2) and property (v) we have
$$D({\bf w}_{n})\rightarrow D({\bf w})$$ in $L^{2}(0,T,X_{0})$. Then if we take limit in (\ref{ncd6}) we have ${\bf \phi}=T({\bf w})$. Finally, since the whole sequence $\{{\bf u}_{n}\}_{n}$ is bounded in $K$ which is weakly compact, then it converges weakly in $W(0,T)$. By uniqueness of solution of (\ref{ncd6}) the whole sequence ${\bf u}_{n}=T({\bf w}_{n})$ must converge weakly to ${\bf \phi}=T({\bf w})$; therefore $T$ is weakly continuous and (3) holds.

Thus, Schauder fixed point theorem proves the existence of a solution ${\bf u}$ of (\ref{cd3}). The solution ${\bf u}$ is in $K$ and therefore ${\bf u}\in L^{2}(0,T,X_{1}), \frac{d{\bf u}}{dt}\in L^{2}(0,T,(X_{1})')$, and it satisfies (\ref{ncd10}), (\ref{ncd12}) and (\ref{ncd13}). Furthermore, due to the conditions (H1)-(H3) on $D$, at least ${\bf u}\in C(0,T,X_{0})$.

\subsubsection*{Regularity of solution}
\label{sec:sec212}
The same bootstrap argument as in Catt\'e et al. \cite{CatteLMC1992} and Weickert \cite{Weickert2} applies to obtain that ${\bf u}$ is a strong solution and ${\bf u}\in C^{\infty}(\overline{\Omega}\times (0,T])$ if (H2) is substituted by the hypothesis that $D$ is in $C^{\infty}(\mathbb{R}^{2},M_{2\times 2}(\mathbb{R}))$.
\subsubsection*{Uniqueness of solution}
\label{sec:sec213}
Consider ${\bf u}^{(1)}=(u^{(1)},v^{(1)})^{T}, {\bf u}^{(2)}=(u^{(2)},v^{(2)})^{T}$ solutions of (\ref{cd3}) with the same initial condition. Then for all ${\bf w}=(w_{1},w_{2})^{T}\in X_{1}$
\begin{eqnarray*}
&&\int_{\Omega} \left((\partial_{t}({u}^{(1)}-{u}^{(2)}))w_{1}+(\partial_{t}({ v}^{(1)}-{v}^{(2)}))w_{2}\right)d\Omega\\
&&+\int_{\Omega}\mathrm{tr}\left((J{\bf w})^{T}D({\bf u}^{(1)})(J{\bf u}^{(1)})\right)d\Omega\\
&&-\int_{\Omega}\mathrm{tr}\left((J{\bf w})^{T}D({\bf u}^{(2)})(J{\bf u}^{(2)})\right)d\Omega=0,
\end{eqnarray*}
which can be written as
\begin{eqnarray*}
&&\int_{\Omega} \left((\partial_{t}({u}^{(1)}-{u}^{(2)}))w_{1}+(\partial_{t}({ v}^{(1)}-{v}^{(2)}))w_{2}\right)d\Omega\\
&&+\int_{\Omega}\mathrm{tr}\left((J{\bf w})^{T}D({\bf u}^{(1)})(J({\bf u}^{(1)}-{\bf u}^{(2)}))\right)d\Omega\\
&&+\int_{\Omega}\mathrm{tr}\left((J{\bf w})^{T}\left(D({\bf u}^{(1)})-D({\bf u}^{(2)})\right)(J{\bf u}^{(2)})\right)d\Omega=0.
\end{eqnarray*}
Now we take ${\bf w}={\bf u}^{(1)}-{\bf u}^{(2)}$ and use (H1), (H2) to write
\begin{eqnarray*}
&&\frac{1}{2}\frac{d}{dt}||{\bf u}^{(1)}(t)-{\bf u}^{(2)}(t)||_{X_{0}}^{2}+\alpha
||J\left({\bf u}^{(1)}(t)-{\bf u}^{(2)}(t)\right)||_{X_{0}}^{2}\\
&&\leq L||{\bf u}^{(1)}(t)-{\bf u}^{(2)}(t)||_{X_{0}}||J {\bf u}^{(2)}(t)||_{X_{0}}\\
&&||J \left({\bf u}^{(1)}(t)-{\bf u}^{(2)}(t)\right)||_{X_{0}}\\
&&\leq \frac{1}{\alpha}L^{2}||{\bf u}^{(1)}(t)-{\bf u}^{(2)}(t)||_{X_{0}}^{2}||J {\bf u}^{(2)}(t)||_{X_{0}}^{2}\\
&&+\frac{\alpha}{4}||J\left({\bf u}^{(1)}(t)-{\bf u}^{(2)}(t)\right)||_{X_{0}}^{2}.
\end{eqnarray*}
(In the last step the inequality $ab\leq a^{2}/4\epsilon^{2}+\epsilon^{2}b^{2}$ has been used, with $\epsilon^{2}=\alpha/4$.) Therefore
\begin{eqnarray*}
&&\frac{d}{dt}||{\bf u}^{(1)}(t)-{\bf u}^{(2)}(t)||_{X_{0}}^{2}\\
&&\leq
 \frac{2}{\alpha}L^{2}||{\bf u}^{(1)}(t)-{\bf u}^{(2)}(t)||_{X_{0}}^{2}||J {\bf u}^{(2)}(t)||_{X_{0}}^{2}.\label{ncd14}
\end{eqnarray*}
Finally, Gronwall's lemma leads to
\begin{eqnarray}
&&||{\bf u}^{(1)}(t)-{\bf u}^{(2)}(t)||_{X_{0}}^{2}\label{ncd15}\\
&&\leq
||{\bf u}^{(1)}(0)-{\bf u}^{(2)}(0)||_{X_{0}}^{2}\mathrm {exp}\left(C\int_{0}^{t}
||J{\bf u}^{(2)}(s)||_{X_{0}}^{2}ds\right),\nonumber
\end{eqnarray}
with $C=\frac{2}{\alpha}L^{2}$
and since ${\bf u}^{(1)}(0)={\bf u}^{(2)}(0)$ then uniqueness is proved.
%
%

\subsubsection*{Continuous dependence on initial data}
\label{sec:sec215}
Since ${\bf u}$ is bounded on $\overline{Q_{T}}$, then $J{\bf u}$ is bounded and hypothesis (H1) on $D$ implies
\begin{eqnarray*}
&&\int_{0}^{t}||J {\bf u}(\cdot,s)||_{X_{0}}^{2}ds\\
&&\leq 
\int_{0}^{T}||J {\bf u}(\cdot,s)||_{X_{0}}^{2}ds
=\frac{1}{\alpha}\int_{0}^{T}\alpha ||J {\bf u}(\cdot,s)||_{X_{0}}^{2}ds\\
&\leq &\frac{1}{\alpha}\left|\int_{0}^{T}\int_{\Omega}\nabla {\bf u}({\bf x},t) D({\bf u}({\bf x},t))\nabla{\bf u}({\bf x},t)^{T}d\Omega\right|ds\\
&=&\frac{1}{\alpha}\left|\int_{0}^{T}\int_{\Omega}{\bf u}({\bf x},t)^{T} {\bf u}_{t}({\bf x},t)d\Omega\right|ds\\
&&\leq \frac{1}{\alpha}\int_{0}^{T}||{\bf u}(\cdot,s)||_{X_{0}} ||{\bf u}_{t}(\cdot,s)||_{X_{0}}ds\\
&\leq & \frac{1}{\alpha}||{\bf u}||_{L^{2}(0,T,X_{1})} ||{\bf u}_{t}||_{L^{2}(0,T,(X_{1})')}.
\end{eqnarray*}
Now, let $\epsilon>0$ and take
$$
\delta:=\epsilon \,\mathrm {exp}\left(-\frac{C}{\alpha}||u(s)||_{L^{2}(0,T,X_{1})} ||u_{t}||_{L^{2}(0,T,(X_{1})')}\right).
$$ If $||{\bf u}^{(1)}(0)-{\bf u}^{(2)}(0)||_{X_{0}}<\delta$ and using (\ref{ncd15}) then
$$||{\bf u}^{(1)}(t)-{\bf u}^{(2)}(t)||_{X_{0}}<\epsilon,$$ for all $t\in [0,T]$. This proves the continuous dependence on the initial data.
\end{proof}
\subsubsection*{Extremum principle}
\label{sec:sec214}
Well-posedness results are finished off with the following extremum principle.
\begin{theorem}
\label{th2}
Let us assume that in (\ref{cd1a}) ${\bf u}_{0}=(u_{0},v_{0})^{T}\in L^{\infty}(\Omega)\times L^{\infty}(\Omega)$ and define:
\begin{eqnarray*}
&&a_{1}=\essinf u_{0}, \quad
a_{2}=\essinf v_{0},
\\ &&b_{1}=||u_{0}||_{L^{\infty}(\Omega)}, \quad
b_{2}=||v_{0}||_{L^{\infty}(\Omega)}.
\end{eqnarray*}
Let ${\bf u}=(u,v)^{T}$ be the weak solution of (\ref{cd1})-(\ref{cd1b}). Then for all $({\bf x},t)\in Q_{T}$ 
\begin{eqnarray*}
a_{1}\leq u({\bf x},t)\leq b_{1},\quad
a_{2}\leq v({\bf x},t)\leq b_{2}.
\end{eqnarray*}

\end{theorem}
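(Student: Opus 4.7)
The strategy is to transfer to the nonlinear weak formulation (\ref{cd3}) the truncation argument already used in the proof of Theorem \ref{th1} to derive the pointwise bounds (\ref{ncd8}) and (\ref{ncd9}) for the linear auxiliary problem. The only substitution is that the solution-dependent matrix $D({\bf u})$ plays the role of $D(U,V)$; since hypothesis (H1) remains in force, the positive-definiteness that drove the linear argument is still available at each $({\bf x},t)\in\overline{Q_{T}}$.

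For the upper estimate, I would substitute the test functions $w_{1}=(u-b_{1})_{+}$, $w_{2}=(v-b_{2})_{+}$ into (\ref{cd3}); these belong to $X_{1}$ because the positive-part map is $1$-Lipschitz and preserves $H^{1}(\Omega)$. The chain-rule identities $(\partial_{t}u)(u-b_{1})_{+}=\tfrac{1}{2}\partial_{t}(u-b_{1})_{+}^{2}$ and its analogue for $v$ convert the time-derivative part into $\tfrac{1}{2}\tfrac{d}{dt}\int_{\Omega}\bigl[(u-b_{1})_{+}^{2}+(v-b_{2})_{+}^{2}\bigr]d\Omega$. The trace integral is supported on the set where at least one cutoff is active, and by (H1) applied exactly as in the linear derivation of (\ref{ncd8}), it is non-negative, so
\begin{eqnarray*}
\frac{d}{dt}\int_{\Omega}\bigl[(u-b_{1})_{+}^{2}+(v-b_{2})_{+}^{2}\bigr]\,d\Omega \leq 0.
\end{eqnarray*}
Since the initial data satisfy $u_{0}\leq b_{1}$ and $v_{0}\leq b_{2}$ a.e.\ by definition of $b_{1}, b_{2}$, integration in time forces the positive parts to vanish on $[0,T]$, yielding $u\leq b_{1}, v\leq b_{2}$ a.e.\ in $Q_{T}$. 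A symmetric argument with the negative-part truncations $w_{1}=(u-a_{1})_{-}, w_{2}=(v-a_{2})_{-}$, together with the identity $(\partial_{t}u)(u-a_{1})_{-}=\tfrac{1}{2}\partial_{t}(u-a_{1})_{-}^{2}$, delivers the lower bounds $u\geq a_{1}$ and $v\geq a_{2}$. The regularity ${\bf u}\in C(0,T,X_{0})$ supplied by Theorem \ref{th1} then upgrades the a.e.\ inequalities to hold for every $t\in(0,T]$.

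The main delicacy, inherited from the linear step of Theorem \ref{th1}, lies in verifying the non-negativity of the truncated trace integral: the cutoffs act independently on $u$ and $v$, so on a mixed region such as $\{u>b_{1}\}\cap\{v\leq b_{2}\}$ only one row of $J{\bf w}$ is active and the integrand reduces to $D_{11}|\nabla u|^{2}+D_{12}\nabla u\cdot\nabla v$, which is not a pointwise positive-definite form. The remedy—grouping contributions from the two mixed regions and combining (H1) with a Young-type inequality that exploits the $L^{\infty}$ bound (H3) on the off-diagonal entries—applies in the nonlinear setting verbatim, since it only uses the structure of $D$ and not its dependence on ${\bf u}$.
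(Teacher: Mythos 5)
Your strategy is the same as the paper's: the published proof consists exactly of inserting $w_{1}=(u-b_{1})_{+}$, $w_{2}=(v-b_{2})_{+}$ (respectively the negative parts) into the weak formulation (\ref{cd3}) and repeating the computation that gave (\ref{ncd8})--(\ref{ncd9}) for the linear auxiliary problem. You have also correctly located the weak point that the paper passes over: $\nabla(u-b_{1})_{+}$ is supported on $\{u>b_{1}\}$ while $\nabla(v-b_{2})_{+}$ is supported on $\{v>b_{2}\}$, so the trace term only reduces to the quadratic form governed by (H1) on the intersection of these sets. (The paper's linear derivation writes the resulting integral over $\{U_{1}>b_{1},U_{2}>b_{2}\}$, silently discarding the mixed regions.)

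The remedy you invoke does not, however, close this gap. On $\{u>b_{1}\}\cap\{v\leq b_{2}\}$ the integrand is $D_{11}|\nabla u|^{2}+D_{12}\,\nabla u\cdot\nabla v$; Young's inequality together with (H3) bounds the cross term by $\tfrac{1}{2}D_{11}|\nabla u|^{2}+C|\nabla v|^{2}$, and the leftover $-C\int_{\{u>b_{1},\,v\leq b_{2}\}}|\nabla v|^{2}$ has the wrong sign and is controlled neither by the dissipation (which lives on other sets) nor by $\int_{\Omega}\bigl[(u-b_{1})_{+}^{2}+(v-b_{2})_{+}^{2}\bigr]d\Omega$, so Gronwall cannot rescue the differential inequality either. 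The two mixed regions are disjoint, so their contributions cannot be regrouped into a single form to which (H1) applies. In fact no argument can succeed under (H1)--(H3) alone: the constant matrix $D=\begin{pmatrix}1&\epsilon\\0&1\end{pmatrix}$ with $0<\epsilon<2$ satisfies all three hypotheses, and the first equation becomes $u_{t}=\Delta u+\epsilon\Delta v$; choosing $u_{0}\equiv0$ and $v_{0}$ non-constant with $\partial_{\bf n}v_{0}=0$ gives $b_{1}=a_{1}=0$ while $\partial_{t}u=\epsilon\Delta v_{0}>0$ somewhere at $t=0$, so $u$ immediately leaves the interval $[a_{1},b_{1}]$. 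A componentwise extremum principle for genuinely coupled cross-diffusion systems requires additional structural hypotheses (for instance, vanishing or sign conditions on the off-diagonal coefficients near the extremal level sets); as written, both your argument and the paper's have this gap.
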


\begin{proof}. Note that the same argument as that of the linear problem (\ref{cd7}) can be adapted to this nonlinear case straightforwardly, by taking, in the case of the maximum principle, $w_{1}=(u-b_{1})_{+}, w_{2}=(v-b_{2})_{+}$ in the weak formulation (\ref{cd3}) and, in the case of the minimum principle, $w_{1}=(u-a_{1})_{-}, w_{2}=(v-a_{2})_{-}$.
\end{proof}

\begin{remark}
\label{rem1}
In Gilboa et al. \cite{GilboaSZ2004}, a nonlinear complex diffusion problem with diffusion coefficient of the form
\begin{eqnarray}
c=c(v)=\frac{e^{i\theta}}{1+\left(\frac{v}{\kappa \theta}\right)^{2}},\label{cdr1}
 \end{eqnarray}
is considered. In (\ref{cdr1}) the image is represented by a complex function $u+iv$, $\kappa$ is a threshold parameter and $\theta$ is a phase angle parameter. In the corresponding cross-diffusion formulation (\ref{cd1}) for ${\bf u}=(u,v)^{T}$, the coefficient matrix is
\begin{eqnarray}
D(u,v)&=&g(v)\begin{pmatrix}\cos\theta &-\sin\theta\\ \sin\theta&\cos\theta\end{pmatrix},\nonumber\\ 
g(v)&=&\frac{1}{1+\left(\frac{v}{\kappa \theta}\right)^{2}}.\label{cdr2}
\end{eqnarray}
Thus, for $\xi\in\mathbb{R}^{2}$,
\begin{eqnarray*}
\xi^{T}D(u,v)\xi=(g(v)\cos\theta)|\xi|^{2}.
\end{eqnarray*}
The function $g$ in (\ref{cdr2}) is decreasing for $v\geq 0$ and satisfies $g(0)=1$, $\lim_{s\rightarrow +\infty} g(s)=0$. Consequently, $D$ in (\ref{cdr2}) would not satisfy (H1) for $v\geq 0$. In addition to assuming $\theta\in (0,\pi)$ (in order to have $\cos\theta>0$), two strategies to overcome this drawback are suggested. 
\begin{itemize}
\item
The first one is to  replace $g(v)$ in (\ref{cdr2}) by $g(M(v))$, where $M(\cdot)$ is a cut-off operator
\begin{equation}\label{cdr3}
M(v)({\mathbf x},t)=\min_{({\mathbf x},t)\in \overline{Q_T}}\{v({\mathbf x},t),M\},
\end{equation}
with $M$ a sufficiently large constant. The same approach can be generalized for the cross-diffusion matrix operator $D$.
\item A second {strategy} is to replace $g(v)$ in (\ref{cdr2}) by $g(|w_{\sigma}|)$ where $w_{\sigma}$ is the second component of the matrix convolution ${\bf v}_{\sigma}=K_{\sigma}\ast {\bf u}$, $K_{\sigma}:=K(\cdot,\sigma)=(k_{ij}(\cdot,\sigma))_{i,j=1,2}$ is the matrix such that (Ara\'ujo et al. \cite{ABCD2016})
\begin{eqnarray*}
\widehat{K}_{\sigma}(\xi)=(\widehat{k}_{ij}(\cdot,\sigma))_{i,j=1,2}=e^{-|\xi |^{2}\sigma d},\quad \xi\in\mathbb{R}^{2},\label{cdr4}
\end{eqnarray*}
where
\begin{eqnarray*}
d=d_{\theta}=\begin{pmatrix}\cos\theta &-\sin\theta\\ \sin\theta&\cos\theta\end{pmatrix}.
\end{eqnarray*}
(The matrix convolution $K_{\sigma}\ast {\bf u}$ is defined as the vector
\begin{eqnarray*}
\begin{pmatrix}k_{11}\ast \widetilde{u}+k_{12}\ast \widetilde{v}\\
k_{21}\ast \widetilde{u}+k_{22}\ast \widetilde{v}
\end{pmatrix}
\end{eqnarray*}
where $\ast$ denotes the usual convolution operator in $\mathbb{R}^{2}$ and ${\bf \widetilde{u}}=(\widetilde{u},\widetilde{v})^{T}$ is a continuous extension of ${\bf u}$ in $\mathbb{R}^{2}$.)
 
We observe that the weak formulation (\ref{cd3}) with the corresponding modified matrix $D(u,v)=g(|w_{\sigma}|)d_{\theta}$ satisfies the conclusions of Theorem \ref{th1} by adapting the proof as follows (see Catt\'e et al. \cite{CatteLMC1992}): Let $U,V\in W(0,T)\bigcap L^{\infty}(0,T,H^{0}(\Omega))$ such that
\begin{eqnarray*}
&&||U||_{L^{\infty}(0,T,H^{0}(\Omega))}\leq ||u_{0}||_{0},\\
&&||V||_{L^{\infty}(0,T,H^{0}(\Omega))}\leq ||v_{0}||_{0}.
\end{eqnarray*}
Since $U,V\in L^{\infty}(0,T,H^{0}(\Omega))$ and $g$ as well as each entry of $K_{\sigma}$ are $C^{\infty}$ then $g(|w_{\sigma}|)\in L^{\infty}(0,T,C^{\infty}(\Omega))$. Thus, since $g$ is decreasing, there is $C>0$, which only depends on $g, K_{\sigma}$ and $||u_{0}||_{0}, ||v_{0}||_{0}$ such that
\begin{eqnarray*}
g(|w_{\sigma}|)\leq C
\end{eqnarray*}
almost everywhere in $Q_{T}$. Thus the corresponding matrix $D(u,v)=g(|w_{\sigma}|)d_{\theta}$ satisfies (H1) for almost any $({\bf x},t)\in Q_{T}$. With this modification, the rest of Theorem \ref{th1} is proved in the same way.
\end{itemize}
The previous argument can be generalized to a general cross-diffusion problem (\ref{cd1}) with cross-diffusion matrices of the form
\begin{eqnarray}
D({\bf u})=D(u,v)=g(|w_{\sigma}|)d,\label{cdr5}
\end{eqnarray}
where 
\begin{itemize}
\item[(i)]
$w_{\sigma}$ is the second component of ${\bf v}_{\sigma}=K_{\sigma}\ast {\bf u}$ with $K_{\sigma}$ satisfying
\begin{eqnarray*}
\widehat{K}_{\sigma}(\xi)=e^{-|\xi |^{2}\sigma d},\quad \xi\in\mathbb{R}^{2},\label{cdr6}
\end{eqnarray*}
for some positive definite matrix $d$ and,
\item[(ii)] $g:[0,+\infty) \longrightarrow (0,+\infty)$ is a smooth, decreasing function with $g(0)=1$ and $\lim_{s\rightarrow +\infty} g(s)=0$.
\end{itemize}
\end{remark}

\subsection{Scale-space properties}
\label{sec:sec22}
For $t\geq 0$ let us define the scale-space operator
\begin{eqnarray}
T_{t}:{\bf u}_{0}\longmapsto T_{t}({\bf u}_{0}):={\bf u}(t)={\bf u}(\cdot,t),\label{cd221}
\end{eqnarray}
such that
${\bf u}(t)$ is the unique weak solution at time $t$ of (\ref{cd1})-(\ref{cd1b}) with initial data (\ref{cd1a}) given by ${\bf u}_{0}$. Some properties of (\ref{cd221}) will be here analyzed. More precisely additional hypotheses on $D$ in (\ref{cd1}) allow (\ref{cd221}) to satisfy grey-level shift, reverse constrast, average grey and translational invariances. {In what follows we assume that (H1)-(H3) hold.}
\subsubsection{Grey-level shift invariance}
\label{sec:sec221}
\begin{lemma}
\label{lem1} Let us assume that $D$ in (\ref{cd1}) additionally satisfies
\begin{eqnarray}
D({\bf u}({\bf x},t)+{\bf C})=D({\bf u}({\bf x},t)),\label{cd222}
\end{eqnarray}
for all $({\bf x},t)\in Q_{T}, {\bf u}(\cdot, t)\in X_{1}$ and ${\bf C}=(C_{1},C_{2})^{T}\in \mathbb{R}^{2}$. Then 
\begin{eqnarray}
T_{t}({\bf 0})={\bf 0},\quad T_{t}({\bf u}_{0}+{\bf C})=T_{t}({\bf u}_{0})+{\bf C},\quad t\geq 0.\label{ncd16}
\end{eqnarray}
\end{lemma}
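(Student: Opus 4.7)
The plan is to verify both identities in \eqref{ncd16} by constructing explicit weak solutions and invoking the uniqueness part of Theorem \ref{th1}.

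First, for $T_t({\bf 0})={\bf 0}$, I would observe that the constant function ${\bf u}\equiv {\bf 0}$ trivially satisfies the weak formulation \eqref{cd3}: all time derivatives and all gradients $J{\bf u}$ vanish, and the Neumann boundary conditions \eqref{cd1b} hold. Since ${\bf u}\equiv{\bf 0}\in C(0,T,X_0)\cap L^2(0,T,X_1)$, the uniqueness statement of Theorem \ref{th1} (which applies because (H1)--(H3) are in force) identifies it with $T_t({\bf 0})$.

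For the translation identity, let ${\bf u}(t)=T_t({\bf u}_0)$ denote the unique weak solution with initial datum ${\bf u}_0$, and define the candidate ${\bf v}(t):={\bf u}(t)+{\bf C}$. I would then check directly that ${\bf v}$ is a weak solution of \eqref{cd1}--\eqref{cd1b} with initial data ${\bf u}_0+{\bf C}$. The key observations are that $\partial_t{\bf v}=\partial_t{\bf u}$ (since ${\bf C}$ is constant in $t$), that $J{\bf v}=J{\bf u}$ (since ${\bf C}$ is constant in ${\bf x}$), and that by the new hypothesis \eqref{cd222} the diffusion matrix is invariant, $D({\bf v})=D({\bf u}+{\bf C})=D({\bf u})$. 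Plugging into \eqref{cd3} for arbitrary test function ${\bf w}\in X_1$ shows that ${\bf v}$ satisfies the same weak equation as ${\bf u}$, while ${\bf v}(0)={\bf u}_0+{\bf C}$; the Neumann condition also carries over because the normal fluxes depend only on $J{\bf v}=J{\bf u}$ and on $D({\bf v})=D({\bf u})$. Uniqueness in Theorem \ref{th1} then gives $T_t({\bf u}_0+{\bf C})={\bf v}(t)=T_t({\bf u}_0)+{\bf C}$.

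There is no substantial obstacle here: the argument is essentially a change-of-variables check, and the only nontrivial input is the translation invariance \eqref{cd222} of $D$, which is exactly what makes $D({\bf u}+{\bf C})$ collapse to $D({\bf u})$ so that the shifted function solves the same PDE. The mild point worth being careful about is that ${\bf u}_0+{\bf C}\in X_1$ on the bounded domain $\Omega$ (so that Theorem \ref{th1} is applicable to the shifted initial datum), which is immediate since constants belong to $H^1(\Omega)$ when $\Omega$ is bounded.
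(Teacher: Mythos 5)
Your argument is correct and follows essentially the same route as the paper: both proofs exhibit ${\bf 0}$ and $T_t({\bf u}_0)+{\bf C}$ as solutions with the appropriate initial data and conclude by the uniqueness established in Theorem \ref{th1}, using \eqref{cd222} to see that the shifted function satisfies the same equation. Your version merely spells out the verification of the weak formulation in more detail than the paper does.
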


\begin{proof}
The main argument for the proof is the uniqueness of solution of (\ref{cd1})-(\ref{cd1b}). Note first that ${\bf u}={\bf 0}$ is a solution with ${\bf u}_{0}={\bf 0}$ and consequently 
it is clear that $T_{t}({\bf 0})={\bf 0}$. On the other hand, because of (\ref{cd222}) we have that
\begin{eqnarray*}
{\bf w}(t)=T_{t}({\bf u}_{0})+{\bf C},\quad t\geq 0,
\end{eqnarray*} satisfies (\ref{cd1}) with initial condition ${\bf u}_{0}+{\bf C}$ and therefore, by uniqueness, it must coincide with $T_{t}({\bf u}_{0}+C)$.
\end{proof}

\begin{remark}
From Ara\'ujo et al. \cite{ABCD2016}, we know that the kernel matrices $K_{\sigma}$ satisfying (\ref{cdr6}) are mass preserving, that is $K_{\sigma}\ast {\bf C}={\bf C}, {\bf C}\in \mathbb{R}^{2}$ and therefore
\begin{eqnarray*}
K_{\sigma}\ast ({\bf u}+{\bf C})=(K_{\sigma}\ast {\bf u})+{\bf C}.
\end{eqnarray*}
This implies that cross-diffusion coefficient matrices  (\ref{cdr5}) satisfy (\ref{ncd16}) but only for constants ${\bf C}=(C_{1},0)^{T}, C_{1}\in\mathbb{R}$. If the first component of ${\bf u}(t)$ represents the grey-level values of the filtered image at time $t$ then this weaker version of (\ref{ncd16}) can be interpreted as shift invariance of the grey values.
\end{remark}

\subsubsection{Reverse contrast invariance}
\label{sec:sec222}
\begin{lemma}
\label{lem2} Let us assume that $D$ in (\ref{cd1}) additionally satisfies
\begin{eqnarray}
D(-{\bf u}({\bf x},t))=D({\bf u}({\bf x},t)),\label{cd223}
\end{eqnarray}
for all $({\bf x},t)\in Q_{T}, {\bf u}(\cdot, t)\in X_{1}$. Then 
\begin{eqnarray}
T_{t}(-{\bf u}_{0})=-T_{t}({\bf u}_{0}),\quad t\geq 0.\label{ncd16b}
\end{eqnarray}
\end{lemma}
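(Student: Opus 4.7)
The proposal is to mimic exactly the uniqueness-based argument used for Lemma \ref{lem1}: construct a candidate solution of the IBVP with initial data $-{\bf u}_0$ by negating $T_t({\bf u}_0)$, then invoke the uniqueness part of Theorem \ref{th1} to identify it with $T_t(-{\bf u}_0)$.

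Concretely, let ${\bf u}(t)=T_t({\bf u}_0)$ be the weak solution guaranteed by Theorem \ref{th1}, and set ${\bf z}(t):=-{\bf u}(t)$. Clearly ${\bf z}(0)=-{\bf u}_0\in X_1$, and ${\bf z}\in C(0,T,X_0)\cap L^2(0,T,X_1)$. Since $\partial_t {\bf z}=-\partial_t {\bf u}$ and $J{\bf z}=-J{\bf u}$, while hypothesis (\ref{cd223}) gives $D({\bf z}({\bf x},t))=D(-{\bf u}({\bf x},t))=D({\bf u}({\bf x},t))$, substituting ${\bf z}$ into the left-hand side of the weak formulation (\ref{cd3}) yields, for every ${\bf w}\in X_1$,
\begin{eqnarray*}
&&\int_{\Omega}\!\left((\partial_{t}z_{1})w_{1}+(\partial_{t}z_{2})w_{2}\right)d\Omega+\int_{\Omega}\mathrm{tr}\!\left((J{\bf w})^{T}D({\bf z})(J{\bf z})\right)d\Omega \\
&&=-\int_{\Omega}\!\left((\partial_{t}u_{1})w_{1}+(\partial_{t}u_{2})w_{2}\right)d\Omega-\int_{\Omega}\mathrm{tr}\!\left((J{\bf w})^{T}D({\bf u})(J{\bf u})\right)d\Omega=0,
\end{eqnarray*}
because ${\bf u}$ itself satisfies (\ref{cd3}). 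Thus ${\bf z}$ is a weak solution of (\ref{cd1})--(\ref{cd1b}) with initial datum $-{\bf u}_0$.

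By the uniqueness part of Theorem \ref{th1}, ${\bf z}(t)=T_t(-{\bf u}_0)$ for all $t\in[0,T]$, which is exactly (\ref{ncd16b}). There is no real obstacle: everything hinges on the linearity of the differential operators in ${\bf u}$ at fixed diffusion matrix, combined with the evenness assumption (\ref{cd223}) that renders the coefficient matrix invariant under ${\bf u}\mapsto -{\bf u}$. The one point to double-check is that the Neumann boundary conditions (\ref{cd1b}) are preserved under this reflection; but this is automatic from $D({\bf z})\nabla{\bf z}=-D({\bf u})\nabla{\bf u}$, so both boundary fluxes flip sign and remain zero. Hence no extra regularity beyond what Theorem \ref{th1} already provides is needed.
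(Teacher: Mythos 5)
Your proof is correct and follows essentially the same route as the paper: both argue that $-T_t({\bf u}_0)$ solves the weak formulation with initial datum $-{\bf u}_0$ (using the evenness hypothesis (\ref{cd223}) and the linearity of the operator at fixed coefficient matrix) and then conclude by the uniqueness part of Theorem \ref{th1}. You simply spell out the substitution into (\ref{cd3}) more explicitly than the paper does.
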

\begin{proof}
By (\ref{cd223}) the functions ${\bf w}_{1}(t)=T_{t}(-{\bf u}_{0})$ and ${\bf w}_{2}(t)=-T_{t}(-{\bf u}_{0})$ satisfy (\ref{cd3}) with the same initial data $-{\bf u}_{0}$. Therefore, by uniqueness, ${\bf w}_{1}={\bf w}_{2}$ and (\ref{ncd16b}) holds.
\end{proof}

\begin{remark}
Matrices $D$ of the form (\ref{cdr5}) satisfy (\ref{cd223}) and therefore the corresponding operator (\ref{cd221}) satisfies (\ref{ncd16b}).
\end{remark}
\subsubsection{Average grey invariance}
\label{sec:sec223}
For $f\in L^{2}(\Omega)$ we define
\begin{eqnarray*}
m(f)=\frac{1}{A(\Omega)}\int_{\Omega} f({\bf x})d\Omega,
\end{eqnarray*}
where $A(\Omega)$ stands for the area of $\Omega$.
\begin{lemma}
\label{lem3} For ${\bf u}=(u,v)^{T}$ let ${\bf M}({\bf u})=(m(u),m(v))^{T}$. Then
\begin{eqnarray}
{\bf M}(T_{t}({\bf u}_{0}))={\bf M}({\bf u}_{0}),\quad t\geq 0.\label{ncd18}
\end{eqnarray}
\end{lemma}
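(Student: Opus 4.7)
The plan is to prove the conservation of each component's average by testing the weak formulation against constant vectors. Since $\Omega$ is bounded, the constant functions ${\bf w}=(1,0)^{T}$ and ${\bf w}=(0,1)^{T}$ both belong to $X_{1}=H^{1}(\Omega)\times H^{1}(\Omega)$, so they are admissible test functions in (\ref{cd3}).

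First I would substitute ${\bf w}=(1,0)^{T}$ into the weak formulation. Because $J{\bf w}=0$ identically, the trace term $\int_{\Omega}\mathrm{tr}\left((J{\bf w})^{T}D({\bf u})(J{\bf u})\right)d\Omega$ vanishes, and what remains is
\begin{eqnarray*}
\int_{\Omega}(\partial_{t}u)({\bf x},t)\, d\Omega = 0,\quad t\in(0,T].
\end{eqnarray*}
By Theorem \ref{th1} we know ${\bf u}\in C(0,T,X_{0})\cap L^{2}(0,T,X_{1})$ with weak time derivative in $L^{2}(0,T,(X_{1})')$, so the integral $\int_{\Omega}u(\cdot,t)\, d\Omega$ is an absolutely continuous function of $t$ whose derivative equals the expression above. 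Integrating between $0$ and $t$ yields $\int_{\Omega}u({\bf x},t)\, d\Omega=\int_{\Omega}u_{0}({\bf x})\, d\Omega$, and dividing by $A(\Omega)$ gives $m(u(\cdot,t))=m(u_{0})$. Repeating the argument with ${\bf w}=(0,1)^{T}$ produces $m(v(\cdot,t))=m(v_{0})$, and together these two identities are exactly (\ref{ncd18}).

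As a complementary viewpoint (useful if the strong solution is available, as in the second part of Theorem \ref{th1}), one can instead integrate (\ref{cd1}) componentwise over $\Omega$ and apply the divergence theorem; the flux integrals over $\partial\Omega$ are precisely the two scalar quantities set to zero by the Neumann boundary conditions (\ref{cd1b}), so the same conservation follows. I would mention this only briefly since the weak version above covers the general case.

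The only point that needs care is the use of constant test functions, which is legitimate because $\Omega$ is bounded so constants lie in $H^{1}(\Omega)$; no additional hypothesis on $D$ beyond (H1)--(H3) is required, which matches the statement of the lemma. There is no real obstacle here: the conclusion is essentially a direct consequence of the divergence structure of the system together with the Neumann boundary conditions.
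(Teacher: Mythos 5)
Your proof is correct and rests on the same idea as the paper's: the divergence structure of (\ref{cd1}) together with the Neumann conditions (\ref{cd1b}) forces $\frac{d}{dt}\int_{\Omega}u\,d\Omega=\frac{d}{dt}\int_{\Omega}v\,d\Omega=0$. The paper carries this out by integrating the equations over $\Omega$ and applying the divergence theorem (your ``complementary viewpoint''), whereas your primary route of testing (\ref{cd3}) with the constant vectors $(1,0)^{T}$ and $(0,1)^{T}$ is the weak-form version of the same computation and is, if anything, slightly cleaner for weak solutions.
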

\begin{proof}
We consider the vector function
\begin{eqnarray*}
&&{\bf G}(t)=(G_{1}(t),G_{2}(t))^{T}, \\
&& G_{1}(t)=\int_{\Omega} u({\bf x},t)d\Omega,
\quad G_{2}(t)=\int_{\Omega} v({\bf x},t)d\Omega,\; t\geq 0,
\end{eqnarray*}
where ${\bf u}=(u,v)^{T}=T_{t}({\bf u}_{0})$.
As in Weickert \cite{Weickert2}, we have, for $i=1,2$,
\begin{eqnarray*}
|G_{i}(t)-G_{i}(0)|\leq A(\Omega)^{1/2}||{\bf u}(t)-{\bf u}(0)||_{X_{0}}.
\end{eqnarray*}
Since at least ${\bf u}\in C(0,T,X_0)$ then ${\bf G}$ is continuous at $t=0$. On the other hand, divergence theorem and the boundary conditions (\ref{cd1b}) imply that, for $ i=1,2$
\begin{eqnarray*}
\frac{d}{dt}G_{i}(t)
&=&\int_{\Omega} {\divv}(D_{i1}({\bf u})\nabla u+D_{i2}({\bf u})\nabla v)d\Omega\\
&=&\int_{\partial \Omega} \langle D_{i1}({\bf u})\nabla u+D_{i2}({\bf u})\nabla v), n \rangle d\Gamma=0.
\end{eqnarray*}
Then $G_{i}(t)$ is constant for all $t\geq 0$. Thus the quantity
$
{\bf M}({\bf u}_{0})=(m(u_{0}),m(v_{0}))^{T}$
is preserved by cross-diffusion.
\end{proof}
\begin{remark}
Actually, each component $G_{i}(t), i=1,2$ is preserved. This may be used to establish a suitable definition of average grey level in this formulation, using these two quantities, and its preservation by cross-diffusion; we refer Ara\'ujo et al. \cite{ABCD2016} for a discussion about this question.
\end{remark}
\subsubsection{Translational invariance}
\label{sec:sec224}
Let us define the translational operator $\tau_{{\bf h}}$ as
\begin{eqnarray*}
\tau_{{\bf h}}{\bf u}({\bf x})&=&(u({\bf x}+{\bf h}),v({\bf x}+{\bf h}))^{T},\\
&& {\bf u}=(u,v)^{T}\in X_{0}, \quad {\bf x},{\bf h}\in \mathbb{R}^{2}.
\end{eqnarray*} 
\begin{lemma}
\label{lem4}
We assume that $D$ in (\ref{cd1}) additionally satisfies
\begin{eqnarray}
D(\tau_{{\bf h}}{\bf u}({\bf x},t))=D({\bf u}({\bf x},t)),\label{cd224}
\end{eqnarray}
for all $({\bf x},t)\in Q_{T}, {\bf u}(\cdot, t)\in X_{1}$. Then 
\begin{eqnarray}
T_{t}(\tau_{{\bf h}}{\bf u}_{0})=\tau_{{\bf h}}(T_{t}({\bf u}_{0})),\quad t\geq 0.\label{ncd19}
\end{eqnarray}
\end{lemma}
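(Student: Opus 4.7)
The strategy follows exactly the uniqueness pattern used in Lemmas \ref{lem1} and \ref{lem2}: I want to exhibit two weak solutions of the IBVP with the \emph{same} initial data and invoke Theorem \ref{th1} to conclude they coincide. Concretely, set ${\bf u}(t) := T_{t}({\bf u}_{0})$ and define ${\bf w}({\bf x},t) := \tau_{{\bf h}}{\bf u}({\bf x},t) = {\bf u}({\bf x}+{\bf h},t)$. Then $T_{t}(\tau_{{\bf h}}{\bf u}_{0})$ is, by definition, the unique weak solution with initial data $\tau_{{\bf h}}{\bf u}_{0}$; so it suffices to check that ${\bf w}$ solves the same problem.

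The initial condition is immediate, since ${\bf w}(\cdot,0) = \tau_{{\bf h}}{\bf u}_{0}$. For the PDE, I would use the chain rule $\nabla_{{\bf x}}{\bf w}({\bf x},t) = (J{\bf u})({\bf x}+{\bf h},t)$ together with hypothesis (\ref{cd224}), which gives
\begin{eqnarray*}
D({\bf w}({\bf x},t))(J{\bf w})({\bf x},t) &=& D(\tau_{{\bf h}}{\bf u}({\bf x},t))(J{\bf u})({\bf x}+{\bf h},t)\\
&=& D({\bf u}({\bf x}+{\bf h},t))(J{\bf u})({\bf x}+{\bf h},t).
\end{eqnarray*}
Applying $\mathrm{div}_{{\bf x}}$ to both sides and using that differentiation in ${\bf x}$ commutes with the translation $\tau_{{\bf h}}$, the right-hand side becomes $\tau_{{\bf h}}[\mathrm{div}(D({\bf u})J{\bf u})] = \tau_{{\bf h}}[\partial_t {\bf u}] = \partial_t {\bf w}$. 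So ${\bf w}$ satisfies (\ref{cd1}) in the strong sense (and in the weak sense via a change of variable in the test-function integral).

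The main obstacle, and the reason this is slightly more delicate than Lemmas \ref{lem1}, \ref{lem2}, is that $\Omega$ is bounded, so $\tau_{{\bf h}}$ does not in general map $\Omega$ into itself, nor does it preserve $\partial\Omega$. To make the argument rigorous I would either (i) restrict to translations ${\bf h}$ for which $\tau_{{\bf h}}\Omega = \Omega$ (e.g.\ modulo the natural periodic identification implicit in the image-processing setting, or interpret the problem on all of $\mathbb{R}^{2}$ via the extension $\widetilde{{\bf u}}$ introduced after (\ref{fourt})), or (ii) reformulate (\ref{ncd19}) as an equality between the restrictions to $\Omega \cap (\Omega - {\bf h})$. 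Under such an interpretation the Neumann condition (\ref{cd1b}) is transported consistently by $\tau_{{\bf h}}$ because the outward normal at $\partial\Omega - {\bf h}$ is the translate of that at $\partial\Omega$, and hypothesis (\ref{cd224}) makes the flux identity translation-covariant.

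With the domain issue handled, the uniqueness clause of Theorem \ref{th1} applied to the IBVP with initial data $\tau_{{\bf h}}{\bf u}_{0}$ forces ${\bf w}(t) = T_{t}(\tau_{{\bf h}}{\bf u}_{0})$, which is precisely (\ref{ncd19}). No new estimates are required beyond those already established; the content of the lemma lies entirely in combining the covariance of $D$ under $\tau_{{\bf h}}$ with the well-posedness theorem.
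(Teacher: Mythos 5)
Your proof follows exactly the paper's argument: show that $\tau_{{\bf h}}T_{t}({\bf u}_{0})$ solves the same IBVP as $T_{t}(\tau_{{\bf h}}{\bf u}_{0})$ with initial data $\tau_{{\bf h}}{\bf u}_{0}$ and conclude by the uniqueness clause of Theorem \ref{th1}. The only difference is that you explicitly flag and address the bounded-domain issue (that $\tau_{{\bf h}}$ need not preserve $\Omega$), which the paper's one-line proof passes over in silence; this is a legitimate refinement of the same argument rather than a different route.
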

\begin{proof}
Due to (\ref{cd224}), the functions ${\bf w}_{1}(t)=T_{t}(\tau_{{\bf h}}{\bf u}_{0})$ and ${\bf w}_{2}(t)=\tau_{{\bf h}}T_{t}({\bf u}_{0})$ are solutions of (\ref{cd3}) with the same initial data $\tau_{{\bf h}}{\bf u}_{0}$. Therefore, by uniqueness, (\ref{ncd19}) holds.
\end{proof}

\begin{remark}
Matrices $D$ of the form (\ref{cdr5}) satisfy (\ref{cd224}) and therefore the corresponding operator (\ref{cd221}) satisfies (\ref{ncd19}).
\end{remark}
\subsection{Lyapunov functions and behaviour at infinity}
\label{sec:sec23}
The previous study is finished off by analyzing the existence of Lyapunov functionals and the behaviour of the solution when $t\rightarrow +\infty$. As far as the first question is concerned, we have the following result.
\begin{lemma}
\label{lem5} Let ${\bf u}=(u,v)^{T}$ be the unique weak solution of (\ref{cd1})-(\ref{cd1b}) and let us consider the functional
\begin{eqnarray}
V(t)=\Phi({\bf u}(t)):=\frac{1}{2}\int_{\Omega}\left(u({\bf x},t)^{2}+v({\bf x},t)^{2}\right)d\Omega.\label{cd9}
\end{eqnarray}
Then $V$ defines a Lyapunov function for (\ref{cd1})-(\ref{cd1b}).
\end{lemma}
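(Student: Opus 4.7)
The plan is to verify the two defining properties of a Lyapunov function for the flow ${\bf u}_0 \mapsto T_t({\bf u}_0)$: that $V$ is bounded below, which is immediate from $V(t) \geq 0$, and that $t \mapsto V(t)$ is non-increasing along solutions. The second property will follow by differentiating $V$ in $t$ and recognising the result as (the negative of) the bilinear form that already appears in the weak formulation (\ref{cd3}), in complete analogy with the energy computation already used for $E_{L}(t)$ in the proof of Theorem \ref{th1}.

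The key step is to test (\ref{cd3}) with ${\bf w} = {\bf u}(\cdot,t)$, which is a legitimate choice since ${\bf u}(t) \in X_{1}$ for almost every $t$ by Theorem \ref{th1}. This yields
\begin{equation*}
\int_\Omega \left(u\,\partial_{t}u + v\,\partial_{t}v\right)d\Omega = -\int_\Omega \mathrm{tr}\left((J{\bf u})^{T} D({\bf u})(J{\bf u})\right)d\Omega.
\end{equation*}
The left-hand side is precisely $\frac{d}{dt} V(t)$, whose existence as an absolutely continuous function of $t$ is ensured by the regularity ${\bf u} \in L^{2}(0,T,X_{1})$ with $d{\bf u}/dt \in L^{2}(0,T,(X_{1})')$ provided by Theorem \ref{th1}, together with the standard embedding $W(0,T) \hookrightarrow C([0,T],L^{2}(\Omega))$.

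To bound the right-hand side, I would expand the trace column by column. Writing $\xi_{k}({\bf x},t) = (\partial_{x_k} u, \partial_{x_k} v)^{T}$ for $k = 1,2$, the integrand equals $\xi_{1}^{T} D({\bf u})\xi_{1} + \xi_{2}^{T} D({\bf u})\xi_{2}$, and applying hypothesis (H1) to each of these quadratic forms gives
\begin{equation*}
\mathrm{tr}\left((J{\bf u})^{T} D({\bf u})(J{\bf u})\right) \geq \alpha \left(|\nabla u|^{2} + |\nabla v|^{2}\right) \geq 0.
\end{equation*}
Hence $\frac{d}{dt}V(t) \leq 0$ almost everywhere in $(0,T)$, and combined with $V(t) \geq 0$ this shows that $V$ is a Lyapunov function for (\ref{cd1})-(\ref{cd1b}). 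The computation is essentially routine; the only delicate point is the justification that $\frac{d}{dt}\frac{1}{2}\|{\bf u}(t)\|_{X_{0}}^{2}$ equals the $L^{2}$-pairing on the left-hand side, which is classical once the regularity from Theorem \ref{th1} is in hand, so I do not anticipate any serious obstacle.
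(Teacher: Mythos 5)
Your monotonicity argument is exactly the paper's: test the weak formulation (\ref{cd3}) with ${\bf w}={\bf u}$, identify the left-hand side with $\frac{d}{dt}V(t)$, and use (H1) on the columns of $J{\bf u}$ to conclude $\frac{d}{dt}V(t)\leq 0$. That part is correct, including the column-by-column expansion of the trace.

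However, there is a gap in what you verify as the second defining property. You take ``Lyapunov function'' to mean decreasing and bounded below by $0$, but the notion in force here (following Weickert \cite{Weickert2}, and made explicit by the parallel structure of Lemma \ref{lem6}) additionally requires the functional to be bounded below by its value at the asymptotic steady state, i.e.
\begin{equation*}
\Phi({\bf u}(t))\;\geq\;\Phi({\bf M}({\bf u}_{0})),\qquad t\geq 0,
\end{equation*}
where ${\bf M}({\bf u}_{0})=(m(u_{0}),m(v_{0}))^{T}$ is the constant average-grey image. The paper proves this by combining the average grey invariance ${\bf M}({\bf u}(t))={\bf M}({\bf u}_{0})$ from Lemma \ref{lem3} with Jensen's inequality applied to the convex function $\widetilde{r}(z)=z^{2}/2$, which gives $\widetilde{r}(m(u(t)))\leq \frac{1}{A(\Omega)}\int_{\Omega}\widetilde{r}(u({\bf x},t))\,d\Omega$ and likewise for $v$. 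Your bound $V(t)\geq 0$ is strictly weaker and does not by itself locate the infimum of $\Phi$ at the equilibrium; to complete the proof in the paper's sense you need to add this Jensen-inequality step (or, less directly, combine your monotonicity with the convergence ${\bf u}(t)\rightarrow{\bf M}({\bf u}_{0})$ of Lemma \ref{lem7}, which however is proved later and under an extra hypothesis). Everything else in your argument, including the justification of $\frac{d}{dt}\frac{1}{2}\|{\bf u}(t)\|_{X_{0}}^{2}$ via the regularity ${\bf u}\in L^{2}(0,T,X_{1})$, $\frac{d{\bf u}}{dt}\in L^{2}(0,T,(X_{1})')$, is consistent with the paper.
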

\begin{proof}
Note first that from the weak formulation (\ref{cd3}) with ${\bf w}={\bf u}$ we obtain
\begin{eqnarray*}
\frac{d}{dt}V(t)+
\int_{\Omega}{\rm tr}\left((J{\bf u})^{T}D({\bf u}({\bf x},t))(J{\bf u})\right)d\Omega=0,
\end{eqnarray*}
which, due to (H1) and  (\ref{cd3}), implies
\begin{eqnarray*}
\frac{d}{dt}V(t)\leq 0,\quad t\geq 0.
\end{eqnarray*}
Note also that since $\widetilde{r}(z)={z^{2}}/{2}$ is convex and (\ref{ncd18}) holds, then Jensen inequality implies that
\begin{eqnarray*}
\Phi(M{\bf u}_{0})&=&\int_{\Omega}\frac{(m(u_{0}))^{2}+(m(v_{0}))^{2}}{2}d\Omega\\
&=&\int_{\Omega}\widetilde{r}(m(u_{0}))+\widetilde{r}(m(v_{0}))d\Omega\\
&=&\int_{\Omega}\widetilde{r}(m(u(t)))+\widetilde{r}(m(v(t)))d\Omega\\
&\leq &\int_{\Omega}\left(\frac{1}{A(\Omega)}\int_{\Omega} \widetilde{r}(u({\bf x},t))d\Omega\right.\\
&&\left.+\frac{1}{A(\Omega)}\int_{\Omega} \widetilde{r}(v({\bf x},t))d\Omega\right)d\Omega\\
&=&\int_{\Omega}\left( \widetilde{r}(u({\bf x},t)+ \widetilde{r}(v({\bf x},t)\right)d\Omega=\Phi({\bf u}(t)).
\end{eqnarray*}
Therefore, (\ref{cd9}) is a Lyapunov functional. 
\end{proof}

{The search for more Lyapunov functions will make use of convex functions.
\begin{lemma}
\label{lem6}
Let $a_{j}, b_{j}, j=1,2$ be defined in Theorem \ref{th2}, $I=I({\bf a},{\bf b})=(a_{1},b_{1})\times (a_{2},b_{2})$ and let us assume that $r$ is a $C^{2}(I)$ convex function satisfying
\begin{eqnarray}
\nabla^{2}r({\bf u}({\bf x}))D({\bf u}({\bf x}))=D({\bf u}({\bf x}))\nabla^{2}r({\bf u}({\bf x})),\label{cd226}
\end{eqnarray}
where ${\bf u}$ is the weak solution of (\ref{cd1})-(\ref{cd1b}) with $u_{0},v_{0}\geq 0$ and $\nabla^{2}r(u,v)$ stands for the Hessian of $r$. Then 
\begin{eqnarray}
V_{r}(t)=\Phi_{r}({\bf u}(t))=\int_{\Omega} r(u(t),v(t))d\Omega,\label{cd225}
\end{eqnarray}
is a Lyapunov function for (\ref{cd1})-(\ref{cd1b}).
\end{lemma}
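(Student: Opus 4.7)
The plan is to extend the argument of Lemma \ref{lem5} by choosing in the weak formulation (\ref{cd3}) the test function ${\bf w}(\cdot,t) = \nabla r({\bf u}(\cdot,t)) = (r_u({\bf u}), r_v({\bf u}))^T$. Since $u_0, v_0 \geq 0$, the extremum principle of Theorem \ref{th2} confines ${\bf u}$ to the compact rectangle $[a_1,b_1] \times [a_2,b_2]$, so the first- and second-order partial derivatives of $r$ evaluated along ${\bf u}$ are bounded; together with ${\bf u}(\cdot, t)\in X_1$ and the chain rule for compositions with $C^1$ functions, this shows ${\bf w}(\cdot, t)\in X_1$, hence ${\bf w}$ is an admissible test function. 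With this choice, the first integral in (\ref{cd3}) equals $\frac{d}{dt}V_r(t)$.

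For the second term, the chain rule gives $J{\bf w} = \nabla^2 r({\bf u})\,J{\bf u}$ and, using the symmetry of the Hessian, $(J{\bf w})^T = (J{\bf u})^T \nabla^2 r({\bf u})$. Substituting into (\ref{cd3}) yields
\begin{equation*}
\frac{d}{dt} V_r(t) = -\int_\Omega \mathrm{tr}\!\left((J{\bf u})^T \nabla^2 r({\bf u})\, D({\bf u})\, J{\bf u}\right)\, d\Omega.
\end{equation*}
The core step is the pointwise nonnegativity of the integrand. Set $A = \nabla^2 r({\bf u})$. Convexity of $r$ makes $A$ symmetric positive semidefinite, so its principal square root $A^{1/2}$ is also symmetric and positive semidefinite, and can be written as a polynomial in $A$. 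By the commutativity hypothesis (\ref{cd226}) this polynomial relation transfers the commutation to $A^{1/2}D({\bf u}) = D({\bf u}) A^{1/2}$. Therefore
\begin{equation*}
\mathrm{tr}\!\left((J{\bf u})^T A\, D\, J{\bf u}\right) = \mathrm{tr}\!\left((A^{1/2}J{\bf u})^T D\, (A^{1/2}J{\bf u})\right) \geq 0,
\end{equation*}
where the final inequality applies (H1) column by column to $A^{1/2}J{\bf u}$. Hence $V_r$ is nonincreasing.

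To complete the Lyapunov property I would establish the lower bound at the constant-average image ${\bf u}_{\mathrm{avg}} = (m(u_0), m(v_0))^T$. Applying Jensen's inequality to the convex function $r$ with respect to the normalized measure $d\Omega/A(\Omega)$, together with the conservation of component averages (Lemma \ref{lem3}), gives
\begin{equation*}
\Phi_r({\bf u}_{\mathrm{avg}}) = A(\Omega)\, r(m(u_0), m(v_0)) = A(\Omega)\, r(m(u(t)), m(v(t))) \leq \Phi_r({\bf u}(t)).
\end{equation*}
The principal technical obstacle I foresee is the admissibility of $\nabla r({\bf u})$ in $X_1$ (requiring careful use of the extremum principle and chain-rule regularity for Sobolev compositions); conceptually, the decisive observation is that (\ref{cd226}) is precisely what permits the factorization $AD = A^{1/2}DA^{1/2}$, thereby reducing the quadratic form to the shape governed by (H1).
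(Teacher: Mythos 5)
Your proof is correct and follows essentially the same route as the paper's: differentiate $V_r$ (you via the test function $\nabla r({\bf u})$ in the weak formulation, the paper via the divergence theorem applied to the strong form), reduce to the quadratic form governed by $\nabla^{2}r({\bf u})\,D({\bf u})$ acting on the columns of $J{\bf u}$, and finish with Jensen's inequality combined with conservation of the component averages. Your square-root factorization $AD=A^{1/2}DA^{1/2}$ is a welcome explicit justification of the positivity step that the paper merely asserts (``due to (\ref{cd226}), $\nabla^{2}r(u,v)D({\bf u})$ is positive semi-definite''), and is needed precisely because $D$ is not assumed symmetric.
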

\begin{proof}
Observe that using divergence theorem, boundary conditions (\ref{cd1b}) and after some computations we have 
\begin{eqnarray*}
V_{r}^{\prime}(t)&=&\int_{\Omega}\left(r_{u}(u,v)u_{t}+r_{v}(u,v)v_{t}\right)d\Omega\\
&=&\int_{\Omega}\left(r_{u}(u,v){\divv}\left(D_{11}({\bf u})\nabla u+D_{12}({\bf u})\nabla v\right)\right.\\
&&\left. +r_{v}(u,v){\divv}\left(D_{21}({\bf u})\nabla u+D_{22}({\bf u})\nabla v\right)\right)d\Omega\\
&=&-\int_{\Omega}\left(\langle \nabla^{2}r(u,v)\begin{pmatrix}u_{x}\\v_{x}\end{pmatrix},D({\bf u})\begin{pmatrix}u_{x}\\v_{x}\end{pmatrix}\rangle\right.\\
&&+\left.
\langle \nabla^{2}r(u,v)\begin{pmatrix}u_{y}\\v_{y}\end{pmatrix},D({\bf u})\begin{pmatrix}u_{y}\\v_{y}\end{pmatrix}\rangle\right)d\Omega.
\end{eqnarray*}
Since $r$ is convex, then $\nabla^{2}r(u,v)$ is positive semi-definite. Thus, due to (\ref{cd226}), $\nabla^{2}r(u,v)D({\bf u})$ is positive semi-definite and therefore $V^{\prime}(t)\leq 0, t\geq 0$.
Similarly, the application of a generalized version of Jensen inequality, Zabandan \& Kiliman \cite{Zabandan2012}, and the convexity of $r$ imply
$$r({\bf M}({\bf u}))\leq m(r({\bf u})).$$ This and (\ref{ncd18}) lead to
\begin{eqnarray*}
\Phi_{r}({\bf M}({\bf u}_{0}))&=&\int_{\Omega}r({\bf M}({\bf u}_{0}))d\Omega=\int_{\Omega}r({\bf M}({\bf u}(t)))d\Omega\\
&\leq & \int_{\Omega}m(r({\bf u}(t)))d\Omega\\
&=& \int_{\Omega}\frac{1}{A(\Omega)}\int_{\Omega}r({\bf u}(t))d{\bf x}d\Omega
=\int_{\Omega}r({\bf u}(t))d{\bf x}\\
&=&\Phi_{r}({\bf u}(t)),
\end{eqnarray*}
and (\ref{cd225}) is a Lyapunov functional.
\end{proof}
\begin{remark}
The choice $r(x,y)=\frac{x^{2}+y^{2}}{2}$ leads to the Lyapunov functional (\ref{cd9}). More generally, if $p\geq 2$ then taking
$$r(x,y)=|x|^{p}+|y|^{p},$$ implies that the $L^{p}\times L^{p}$ norm
\begin{eqnarray*}
||{\bf u}||_{L^{p}\times L^{p}}=\left(||u||_{L^{p}}^{p}+||v||_{L^{p}}^{p}\right),\quad {\bf u}=(u,v)^{T},
\end{eqnarray*}
is a Lyapunov functional, see Weickert \cite{Weickert2}.
\end{remark}
}

As far as the behaviour at infinity of the solution of (\ref{cd1})-(\ref{cd1b}) is concerned, the arguments in Weickert \cite{Weickert2} can also be adapted here. 
\begin{lemma}
\label{lem7}
Let ${\bf u}(t), t\geq 0$ be the weak solution of (\ref{cd1})-(\ref{cd1b}) and let us consider ${\bf w}={\bf u}-{\bf M}({\bf u}_{0})$, where ${\bf M}$ is given by Lemma \ref{lem3}. If (\ref{ncd16}) holds then
\begin{eqnarray}
\lim_{t\rightarrow\infty}||{\bf w}(t)||_{X_{0}}=0.\label{cd91}
\end{eqnarray}
\end{lemma}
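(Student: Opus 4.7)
The plan is to exploit the grey-level shift invariance to reduce to a problem with zero mean, then combine the Lyapunov energy decay from Lemma \ref{lem5} with a Poincaré–Wirtinger inequality to obtain (in fact exponential) decay to zero.

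First I would set $\mathbf{C}=-\mathbf{M}(\mathbf{u}_0)\in\mathbb{R}^2$. By hypothesis (\ref{ncd16}) the operator $T_t$ commutes with adding constants, so
\begin{eqnarray*}
\mathbf{w}(t)=T_t(\mathbf{u}_0)-\mathbf{M}(\mathbf{u}_0)=T_t(\mathbf{u}_0-\mathbf{M}(\mathbf{u}_0)).
\end{eqnarray*}
Hence $\mathbf{w}$ itself is the unique weak solution of (\ref{cd1})-(\ref{cd1b}) with initial datum $\mathbf{u}_0-\mathbf{M}(\mathbf{u}_0)$, whose components have zero mean over $\Omega$. The average grey invariance of Lemma \ref{lem3} then yields $\mathbf{M}(\mathbf{w}(t))=\mathbf{0}$ for every $t\geq 0$, i.e.\ both components of $\mathbf{w}(t)$ have vanishing mean on $\Omega$ for all $t$.

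Next I would apply the energy identity already used in the proof of Lemma \ref{lem5} (taking the test function $\mathbf{w}$ in the weak formulation (\ref{cd3}) written for $\mathbf{w}$) together with the uniform ellipticity (H1) to obtain
\begin{eqnarray*}
\frac{1}{2}\frac{d}{dt}\|\mathbf{w}(t)\|_{X_0}^2
=-\int_\Omega \mathrm{tr}\!\left((J\mathbf{w})^T D(\mathbf{w})(J\mathbf{w})\right)d\Omega
\leq -\alpha\,\|J\mathbf{w}(t)\|_{X_0}^2.
\end{eqnarray*}
Because each component of $\mathbf{w}(\cdot,t)$ belongs to $H^1(\Omega)$ and has zero average, the Poincaré–Wirtinger inequality provides a constant $C_P=C_P(\Omega)>0$ such that
\begin{eqnarray*}
\|\mathbf{w}(t)\|_{X_0}^2\leq C_P^2\,\|J\mathbf{w}(t)\|_{X_0}^2,\qquad t\geq 0.
\end{eqnarray*}

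Combining the two inequalities gives the differential inequality
\begin{eqnarray*}
\frac{d}{dt}\|\mathbf{w}(t)\|_{X_0}^2\leq -\frac{2\alpha}{C_P^2}\,\|\mathbf{w}(t)\|_{X_0}^2,
\end{eqnarray*}
and Grönwall's lemma then yields
\begin{eqnarray*}
\|\mathbf{w}(t)\|_{X_0}^2\leq \|\mathbf{u}_0-\mathbf{M}(\mathbf{u}_0)\|_{X_0}^2\,\exp\!\left(-\tfrac{2\alpha}{C_P^2}\,t\right),
\end{eqnarray*}
from which (\ref{cd91}) (with exponential rate) follows. The main subtlety is the justification that both components of $\mathbf{w}(t)$ have zero average \emph{at every time} so that Poincaré–Wirtinger applies; this is precisely where the combination of the shift invariance (\ref{ncd16}) and the average-preserving property (\ref{ncd18}) is essential. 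The regularity needed to apply the energy identity is already guaranteed by Theorem \ref{th1}, since $\mathbf{w}\in C(0,T,X_0)\cap L^2(0,T,X_1)$ for every $T>0$.
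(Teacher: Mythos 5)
Your proof is correct and follows essentially the same route as the paper's: the energy identity from the weak formulation with test function $\mathbf{w}$, uniform ellipticity (H1), a Poincar\'e-type inequality, and Gronwall's lemma yielding exponential decay. You are in fact slightly more careful than the paper in justifying the zero-mean property of $\mathbf{w}(t)$ at every time (via the shift invariance (\ref{ncd16}) and Lemma \ref{lem3}), which is exactly what licenses the Poincar\'e--Wirtinger inequality.
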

\begin{proof}
Since (\ref{ncd16}) holds then 
${\bf w}$ satisfies the diffusion equation of (\ref{cd1}). By using the weak formulation (\ref{cd3}), divergence theorem and the boundary conditions(\ref{cd1b}),  we have
\begin{eqnarray*}
\frac{1}{2}\frac{d}{dt}\int_{\Omega}(w_{1}^{2}+w_{2}^{2})d\Omega &=&-\int_{\Omega}\mathrm{tr}\left((J{\bf w})^{T}D(J{\bf w})\right)d\Omega.
\end{eqnarray*}
Now, (H1) and (\ref{cd3}) imply
\begin{eqnarray*}
\mathrm{tr}\left((J{\bf w})^{T}D(J{\bf w})\right)\geq \alpha
||J{\bf w}||_{X_{0}}^{2}.
\end{eqnarray*}
Therefore
\begin{eqnarray*}
\frac{d}{dt}||{\bf w}||_{X_{0}}^{2}\leq -2\alpha ||J{\bf w}||_{X_{0}}^{2}.
\end{eqnarray*}
Note now that if we apply the Poincar\'e inequality to each $w_{i}, i=1,2$, then there is $C_{0}>0$ such that
\begin{eqnarray*}
||{\bf w}||_{X_{0}}^{2}\leq C_{0} ||J{\bf w}||_{X_{0}}^{2}.
\end{eqnarray*}
This implies that
\begin{eqnarray*}
\frac{d}{dt}||{\bf w}||_{X_{0}}^{2}\leq -2\alpha C_{0} ||{\bf w}||_{X_{0}}^{2}.
\end{eqnarray*}
By Gronwall's lemma
\begin{eqnarray*}
||{\bf w}(t)||_{X_{0}}^{2}\leq e^{-2\alpha C_{0}t} ||{\bf w}(0)||_{X_{0}}^{2}
\end{eqnarray*}
and (\ref{cd91}) holds.
\end{proof}
\begin{remark}
Since we are assuming strong ellipticity (H1) in the model, the asymptotic behaviour (\ref{cd91}) is expected. In particular, we conclude that the model does not preserve the discontinuities of the initial conditions, which is a serious limitation in the Computer Vision context. Assumption (H1) could be relaxed by considering degenerate elliptic cross-diffusion operators $D$, that is, substituting (\ref{cd2}) by
\begin{eqnarray*}
{\bf \xi}^{T}D({\bf u}({\bf x},t)){\bf \xi}\geq 0,\quad ({\bf x},t)\in \overline{Q_{T}}.
\end{eqnarray*} 
The analysis of such models is beyond the scope of this work.
\end{remark}

\section{Numerical experiments}
\label{sec:sec3}
The performance of (\ref{cd1})-(\ref{cd1b}) in filtering problems is numerically illustrated in this section.
\subsection{The numerical procedure}
\label{sec:sec31}
In order to implement (\ref{cd1})-(\ref{cd1b}) some details are described. The first point concerns the choice of the cross-diffusion matrix $D$. We have considered to this end the results on linear cross-diffusion shown in the companion paper Ara\'ujo et al. \cite{ABCD2016} and the complex diffusion approach, Gilboa et al. \cite{GilboaSZ2004}, see Remark \ref{rem1}. According to them, matrices of the form
\begin{eqnarray}
D(u,v)=g(|w|)d,\quad d=\begin{pmatrix} d_{11}&d_{12}\\d_{21}&d_{22}\end{pmatrix},\label{cd31}
\end{eqnarray}
were used for the experiments, with
\begin{eqnarray}
g(v)=\frac{1}{1+\left(\frac{v}{\kappa}\right)^2},\label{cd32}
\end{eqnarray}
with $\kappa$ a threshold parameter, see Gilboa et al. \cite{GilboaSZ2004} and $d$ a positive definite matrix. Both possibilities $w=M(v)$ in (\ref{cdr3}) and $w=w_{\sigma}$ in (\ref{cdr4}) have been implemented. The form (\ref{cd31}), (\ref{cd32}) takes into account (\ref{cdr1}) by using the extended version of the small theta approximation, see Ara\'ujo et al. \cite{ABCD2016} (which justifies the presence of $d_{12}$ in (\ref{cd32})) as well as the classical nonlinear diffusion approach with the form of $g$, see Aubert \& Kornprobst, \cite{AubertK2001}, Catt\'e et al. \cite{CatteLMC1992}, Perona \& Malik \cite{PeronaM1990}.

The guidance about the choice of the matrix $d$ was also based on linear cross-diffusion. Thus if $s=(d_{22}-d_{11})^{2}+4d_{12}d_{21}$, three types of matrices $d$ (for which $s>0$, $s<0$ and $s=0$) have been considered. (The parameter $s$ determines if the eigenvalues of $d$ are real or complex, see Ara\'ujo et al. \cite{ABCD2016}.) The specific examples of $d$ for the experiments are given in Section \ref{sec:sec32}.

A second question on the implementation concerns the choice of a numerical scheme to approximate (\ref{cd1})-(\ref{cd1b}). Thus, the explicit numerical method introduced and analyzed in Ara\'ujo et al. \cite{AraujoBS2012} for the complex diffusion case has been adapted here. The method is now briefly described. By using the notation of Section \ref{sec:sec2}, $\overline{Q}_{T}$ is first dicretized as follows. We define a uniform grid on $\overline{\Omega}=[l_{1},r_{1}]\times [l_{2},r_{2}]$ with mesh step size $h>0$ as
\begin{eqnarray}
\overline{\Omega}_{h}&=&\{{\bf x}_{ij}=(x_{i},y_{j})\in \overline{\Omega}: x_{i}=l_{1}+ih, y_{j}=l_{2}+jh,\nonumber\\
&&i=0,...,N_1-1,
j=0,...,N_2-1\},\label{cd33}
\end{eqnarray}
for integers $N_{1}, N_{2}>1$ such that $hN_{i}=r_{i}-l_{i}, i=1,2$. As far as the time discretization is concerned, fixed $T>0$, for an integer $M\geq 1$ and $\Delta>0$ such that $M\Delta t =T$, the interval $[0,T]$ is partitioned in 
\begin{eqnarray}
0=t^0<t^1<\cdots<t^{M-1}<t^M=T,\label{cd34}
\end{eqnarray}
with $t^{m+1}=t^{m}+\Delta t, m=0,\ldots,M-1$. The resulting discretization of $\overline{Q}_{T}$ with (\ref{cd33}) and (\ref{cd34}) is denoted by $\overline{Q}_h^{\Delta t}={Q}_h^{\Delta t}\cup {\Gamma}_h^{\Delta t}$, where ${Q}_h^{\Delta t}, {\Gamma}_h^{\Delta t}$ stand for the interior and boundary meshes, respectively.

If $M_{N_{1}\times N_{2}}(\mathbb{R})$ denotes the space of $N_{1}\times N_{2}$ real matrices then let us consider some initial distribution $U^{0}, V^{0}: \overline{\Omega}_{h}\rightarrow M_{N_{1}\times N_{2}}(\mathbb{R})$ with $U^{0}=(U_{ij}^{0})_{i=1,j=1}^{N_{1},N_{2}}, V^{0}=(V_{ij}^{0})_{i=1,j=1}^{N_{1},N_{2}}$. From $U^{0}, V^{0}$ and for $m=0,\ldots,M-1$ the approximate image at time $t^{m+1}$ is defined as $(U^{m+1},V^{m+1})^{T}$, where $U^{m+1}=(U_{ij}^{m+1})_{i=1,j=1}^{N_{1},N_{2}}, V^{m+1}=(V_{ij}^{m+1})_{i=1,j=1}^{N_{1},N_{2}}: \overline{\Omega}_{h}\rightarrow M_{N_{1}\times N_{2}}(\mathbb{R})$ satisfy the system (in vector form)
\begin{eqnarray}
\displaystyle  \frac{U^{m+1}-U^m}{\Delta t} & = & \nabla_h\cdot (g(V^m)(d_{11}\nabla_h U^m +d_{12} \nabla_h V^m)),\nonumber\\
\displaystyle  \frac{V^{m+1}-V^m}{\Delta t} & = & \nabla_h\cdot (g(V^m)(d_{21}\nabla_h U^m +d_{22}\nabla_h V^m)),\nonumber\\
&&\label{cd35}
\end{eqnarray}
where $g$ and $d$ are given by (\ref{cd31}), (\ref{cd32}). In (\ref{cd35}), $\nabla_h$ is the discrete operator such that if $W=(W_{ij})_{i=1,j=1}^{N_{1},N_{2}}$ then
\begin{eqnarray*}
\left(\nabla_h W\right)_{ij}&=&\left(\frac{W_{i+1,j}-W_{i-1,j}}{2h},\frac{W_{i,j+1}-W_{i,j-1}}{2h}\right),\\
&& i=1,\ldots,N_{1}-1, j=1,\ldots,N_{2}-1.
\end{eqnarray*}
The scheme (\ref{cd35}) is completed with the discretization of the Neumann boundary conditions (\ref{cd1a}) by using $\nabla_{h}$.

We note that for the complex diffusion case, a stability condition for (\ref{cd35}) with diffusion coefficient (\ref{cdr1}) was derived in Ara\'ujo et al. \cite{AraujoBS2012} and Bernardes et al. \cite{Bernardes:10},
\begin{eqnarray}
\Delta t:=\max_{0\leq m\leq M-1} \Delta t^{m} \le \frac{\cos{\theta}}{4}\left(1+\frac{\min_m{(V^m)^2}}{{\kappa}^2\theta^2} \right).\label{cd36}
\end{eqnarray}
Condition (\ref{cd36}) was taken into account in the numerical experiments below, where $h=1$ and $\Delta t=0.05$ (with $\kappa=10$) were used.
\subsection{Numerical results}
\label{sec:sec32}
In this section several numerical experiments have been performed according to the following steps. Assume that the discrete values $S_{ij}=S({\bf x}_{ij}), {\bf x}_{ij}\in \overline{\Omega}_{h}$ of some real-valued function $S:\overline{\Omega}:\rightarrow\mathbb{R}$ represent an original image on $\overline{\Omega}_{h}$. From $S$, some noise of Gaussian type with zero mean and standard deviation $\sigma^{\prime}$ at pixel ${\bf x}_{ij}$ is added to $S_{ij}$. This is represented by a matrix $N(\sigma^{\prime})=(N_{ij}(\sigma^{\prime}))_{ij}$  and generates the initial noisy image values
\begin{eqnarray*}
{u}^{0}&=&(u_{ij}^{0})_{ij}, \quad u_{ij}^{0}=S_{ij}+N_{ij}(\sigma^{\prime}), \\
 &&i=1,\ldots,N_{1}, j=1,\ldots,N_{2}.
\end{eqnarray*}
Then the explicit method (\ref{cd35}) with initial distribution $U_{ij}^{0}=u_{ij}^{0}, V_{ij}^{0}=0, i=1,\ldots,N_{1}, j=1,\ldots,N_{2}$ and $D$ from (\ref{cd31}), (\ref{cd32}) is run; the corresponding numerical solutions $U^{m}, V^{m}, m=1,\ldots,M$ are monitored in such a way that $U^{m}$ approximates the original signal $S$ at time $t^{m}$. In order to measure the quality of restoration, three metrics are used:
\begin{itemize}
\item Signal-to-Noise-Ratio (SNR):
\begin{eqnarray}
SNR (S,U^m) = 10\log_{10}\left( \frac{\Var(S)}{\Var(U^m-S)}\right),\label{snr}
\end{eqnarray}
where the variance ($\Var$) of an image $U$ is defined by
$$\Var(U) = \frac{1}{N_1N_2}\|U-\bar{U}\|^2_F,$$
$\|\cdot\|_F$ stands for the Frobenius norm and $\bar{U}$ is a uniform image with intensities equal to the mean value of the intensities of $U$.
\item Peak Signal-to-Noise-Ratio (PSNR):
 \begin{eqnarray}
PSNR(S,U^m) = 20\log_{10}\left( \frac{255}{RMSE(S,U^m)}\right),\label{psnr}
\end{eqnarray}
where the Root-Mean-Square-Error ($RMSE$) is defined as
$$RMSE(S,U^{m})=\frac{1}{\sqrt{N_1N_2}}\|S-U^{m}\|_F^2;$$
\item The no-reference perceptual blur metric (NPB) proposed by Cr\'et\'e-Roffet et al. \cite{CRDLN2007}. This is based on evaluating the blur annoyance of the image by comparing the variations between neighbouring pixels before and after the application of a low-pass filter. The estimation ranges from $0$ (the best quality blur perception) to $1$ (the worst one).
\end{itemize}
\begin{figure}[htbp]
\centering
\subfigure[]
{\includegraphics[width=8.2cm,height=8.2cm]{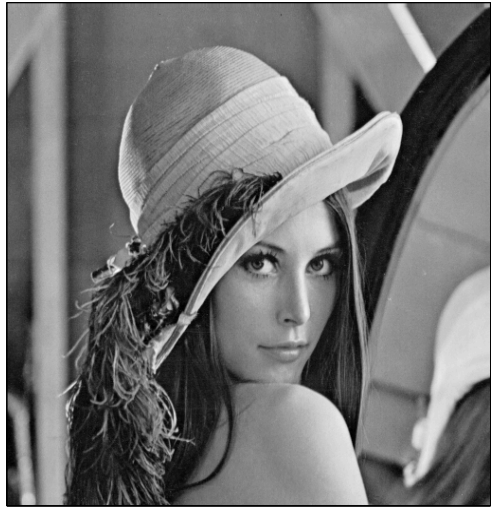}}
\subfigure[]
{\includegraphics[width=8.2cm,height=8.2cm]{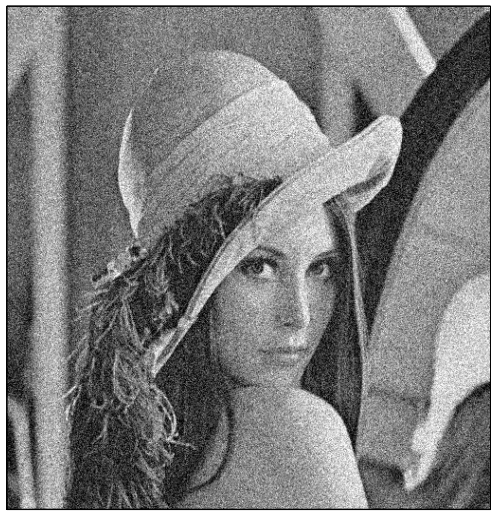}}
\caption{(a) Original image $S$ of Lena; (b) Noisy image of Lena with Gaussian noise of $\sigma^{\prime}=30$.}
\label{figRII_1}
\end{figure}
The following numerical results illustrate the behaviour of (\ref{cd1})-(\ref{cd1b}) according to the choice of the matrix $d$ in (\ref{cd31}) and the implementation of (\ref{cd32}). The experiments are concerned with the filtering of a noisy image of Lena (Figure \ref{figRII_1}) and a first group makes use of the matrices
\begin{itemize}
\item[(i)] NCDF1 ($s>0$):
$d_{11}=1, d_{12}=0.025, d_{21}=1, d_{22}=1$.
\item[(ii)] NCDF2 ($s<0$):
$d_{11}=1, d_{12}=-0.025, d_{21}=0.025,$ $d_{22}=1$.
\item[(iii)] NCDF3 ($s=0$):
$d_{11}=1, d_{12}=-0.025, d_{21}=1, d_{22}=1.1$.
\end{itemize}
 These models were taken to study three points of the filtering: the restoration process from the first component of the numerical solution of (\ref{cd35}), the behaviour of the edges from the second component and the quality of filtering from the computation of the evolution of the three metrics. 
\begin{figure}[htbp]
\centering
\subfigure[]
{\includegraphics[width=0.5\textwidth]{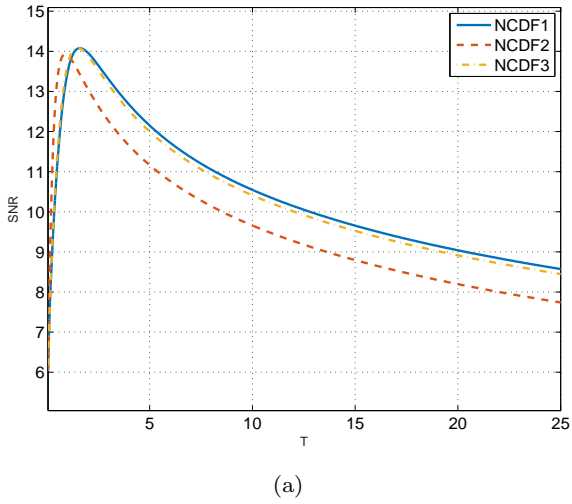}}
\subfigure[]
{\includegraphics[width=0.5\textwidth]{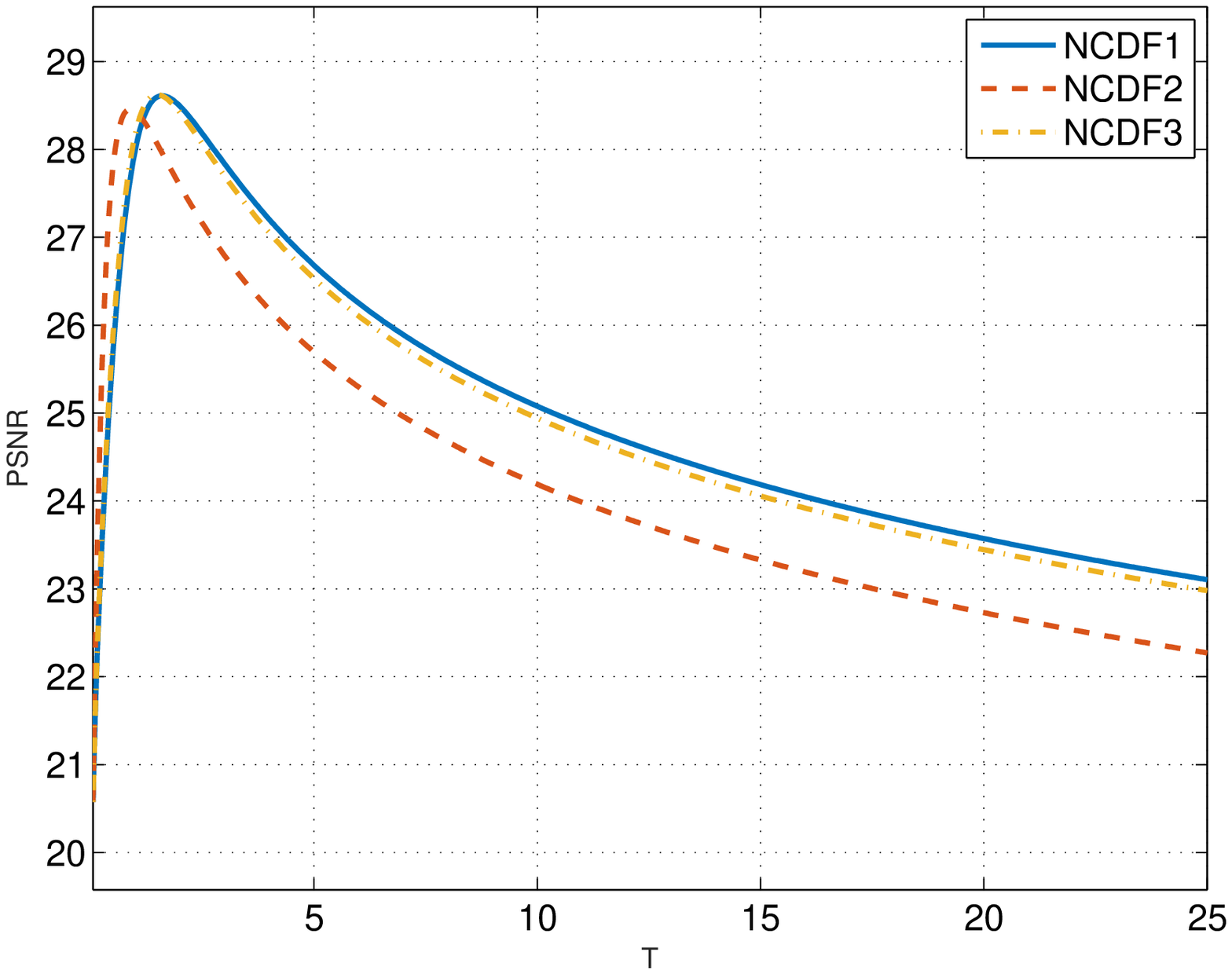}}
\caption{$SNR$ (a) and $PSNR$ (b)  vs. time: NCDF1 (solid line), NCDF2 (dashed line) and NCDF3 (dashed-dotted line).}
\label{figRII_2}
\end{figure}
The numerical experiments  in Figure \ref{figRII_2},  show the time evolution of the SNR and PSNR parameters given by the models NCDF1-3. For the three models, the metrics attain a maximum value from which the quality of restoration is decreasing. The main difference appears in the time at which the maximum holds, being longer in the case of NCDF1 (corresponding to $s>0$) and NCDF3 (for which $s=0$) then in the model NCDF2 (where $s<0$: this would illustrate the complex diffusion case, see Ara\'ujo et al. \cite{ABCD2016}). Note also from Figure \ref{figRII_2} that NCDF1 and NCDF3 will provide a better evolution of the two metrics: they will be more suitable than NCDF2 for long time restoration processes, while NCDF2 performs better in short computations. 
\begin{figure}
\centering
\subfigure[]
{\includegraphics[width=8.2cm,height=8.2cm]{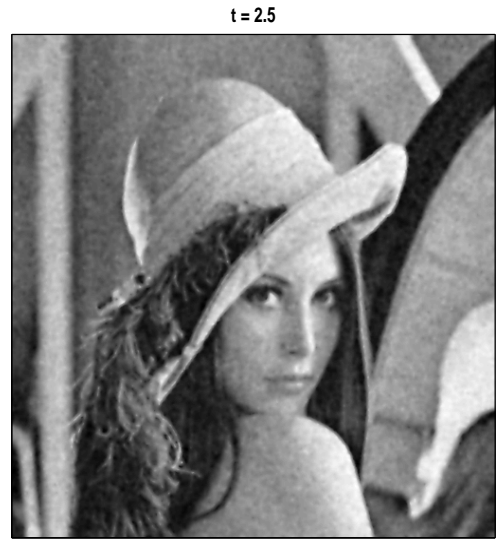}}
\subfigure[]
{\includegraphics[width=8.2cm,height=8.2cm]{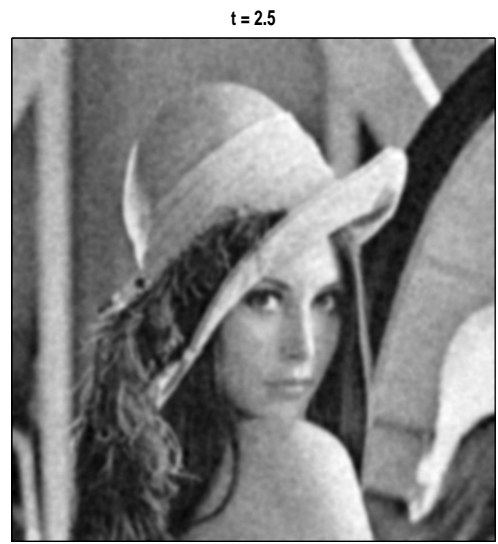}}
\caption{First component of solution of (\ref{cd35}) at time $t=2.5$ with (a) NCDF1 and (b) NCDF2.}
\label{figRII_3a}
\end{figure}
\begin{figure}[htbp]
\centering
\subfigure[]
{\includegraphics[width=8.2cm,height=8.2cm]{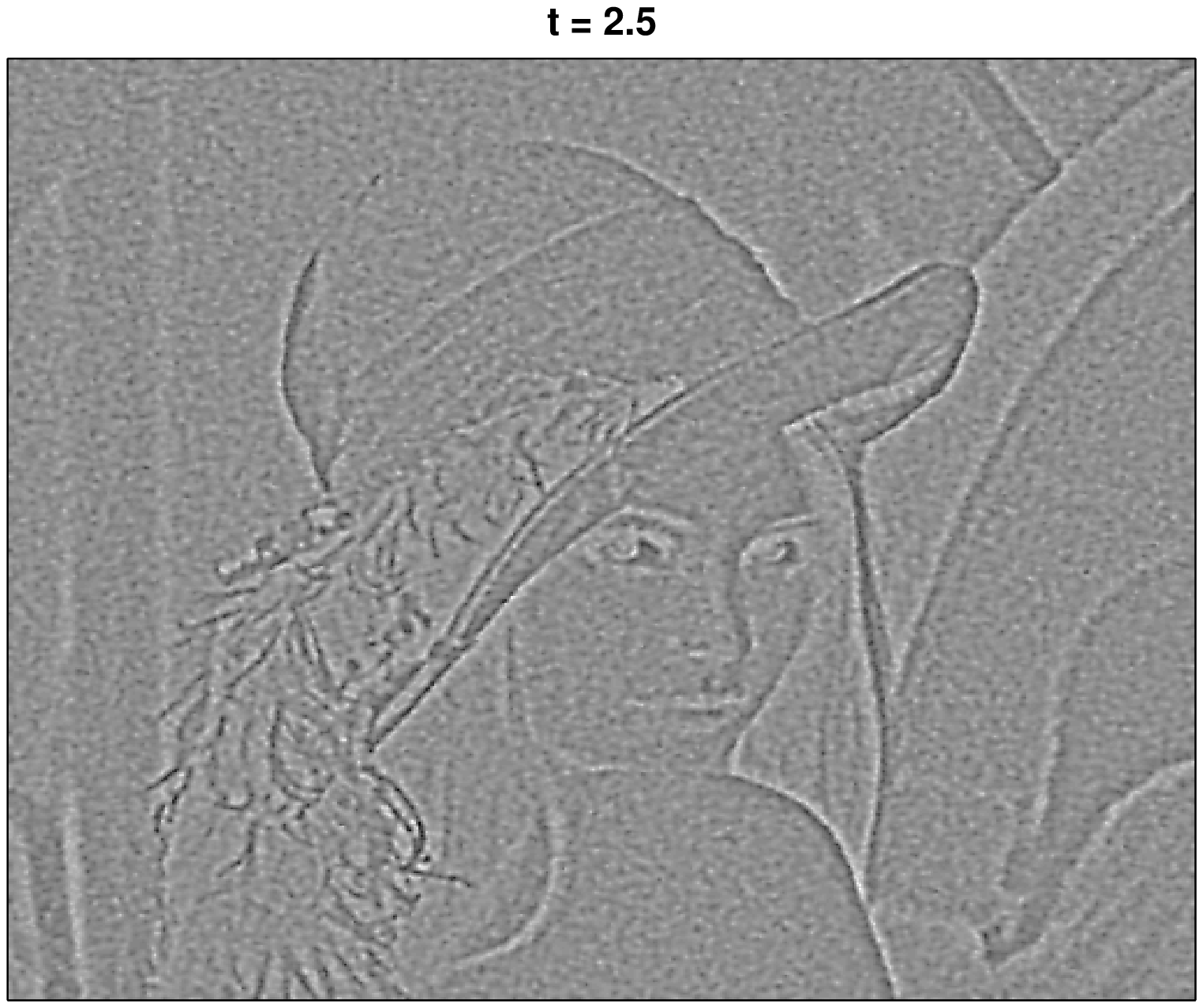}}
\subfigure[]
{\includegraphics[width=8.2cm,height=8.2cm]{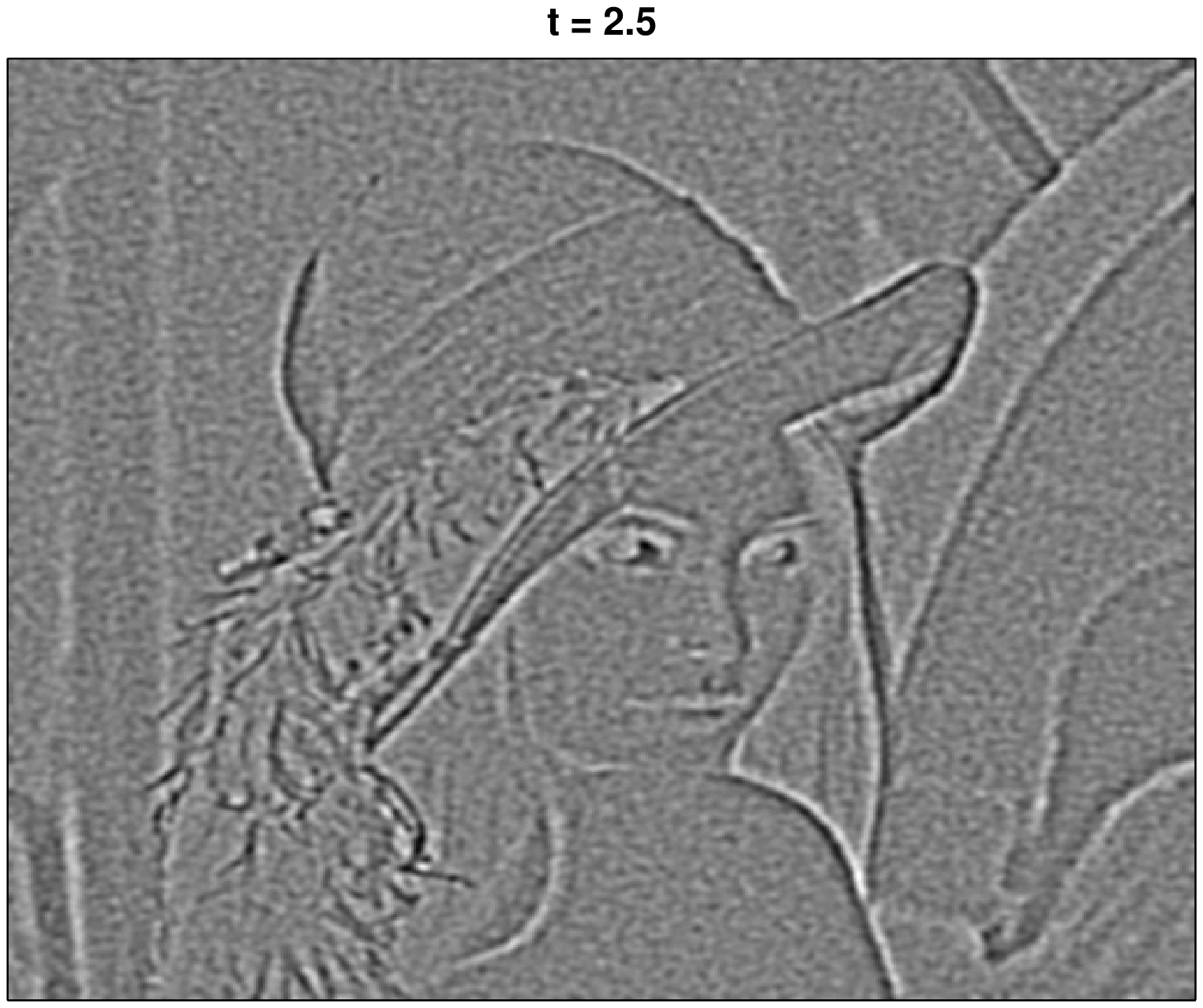}}
\caption{Second component of solution of (\ref{cd35}) at time $t=2.5$ with (a) NCDF1 and (b) NCDF2.}
\label{figRII_3b}
\end{figure}
Since the longer the evolution the more noise is removed, models NCDF1 and NCDF3 suggest a better control of the diffusion to improve the quality of the restored images. This is observed in Figures \ref{figRII_3a} and \ref{figRII_3b}, which show the two components of the solution of (\ref{cd35}) at time $t=2.5$ given by NCDF1 and NCDF2. (The images corresponding to NCDF3 are similar to those of NCDF1 and will not be shown here.) Observe that the second component has the role of edge detector and it is less affected by noise and over diffusion in the case of NCDF1.
\begin{figure}[htbp]
\centering
\subfigure[]
{\includegraphics[width=0.5\textwidth]{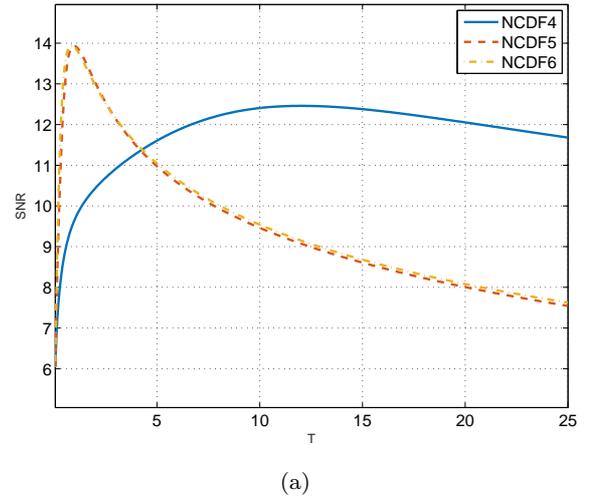}}
\subfigure[]
{\includegraphics[width=0.5\textwidth]{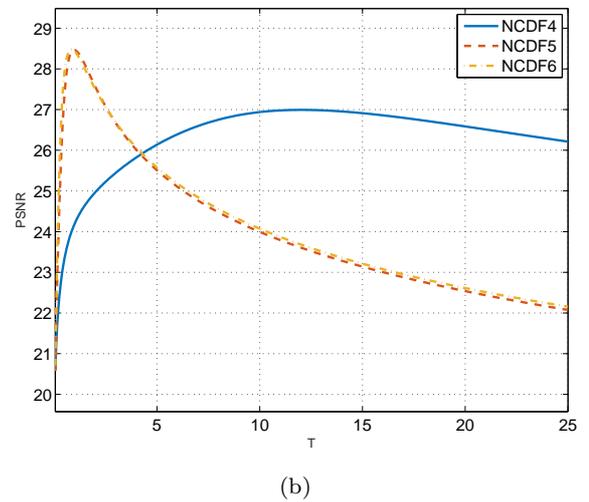}}
\caption{$SNR$ (a) and $PSNR$ (b)  vs. time: NCDF4 (solid line), NCDF5 (dashed line) and NCDF6 (dashed-dotted line).}
\label{figRII_4}
\end{figure}
For large values in magnitude of the entries of the matrix $d$ in (\ref{cd32}) the differences in the models are more significant. This is illustrated by a second group of experiments, for which the matrices are
\begin{itemize}
\item[(i)] NCDF4 ($s>0$):
$d_{11}=1, d_{12}=0.9, d_{21}=1, d_{22}=1$.
\item[(ii)] NCDF5 ($s<0$):
$d_{11}=1, d_{12}=-0.9, d_{21}=0.9, d_{22}=1$.
\item[(iii)] NCDF6 ($s=0$):
$d_{11}=1, d_{12}=-0.9, d_{21}=0.225, 
d_{22}=1.9$,
\end{itemize} 
and the rest of the implementation data is the same as that of the previous experiments. The evolution of the SNR and PSNR values is now shown in Figure \ref{figRII_4}. Note that the behaviour of NCDF5 and NCDF6 is very similar and their quality metrics, compared to those of NCDF4, are more suitable up to a time of filtering close to $t=5$. From this time, NCDF4 behaves better and becomes a better choice to filter for a longer time. 
\begin{figure}[htbp]
\centering
\subfigure
{\includegraphics[width=8.2cm,height=8.2cm]{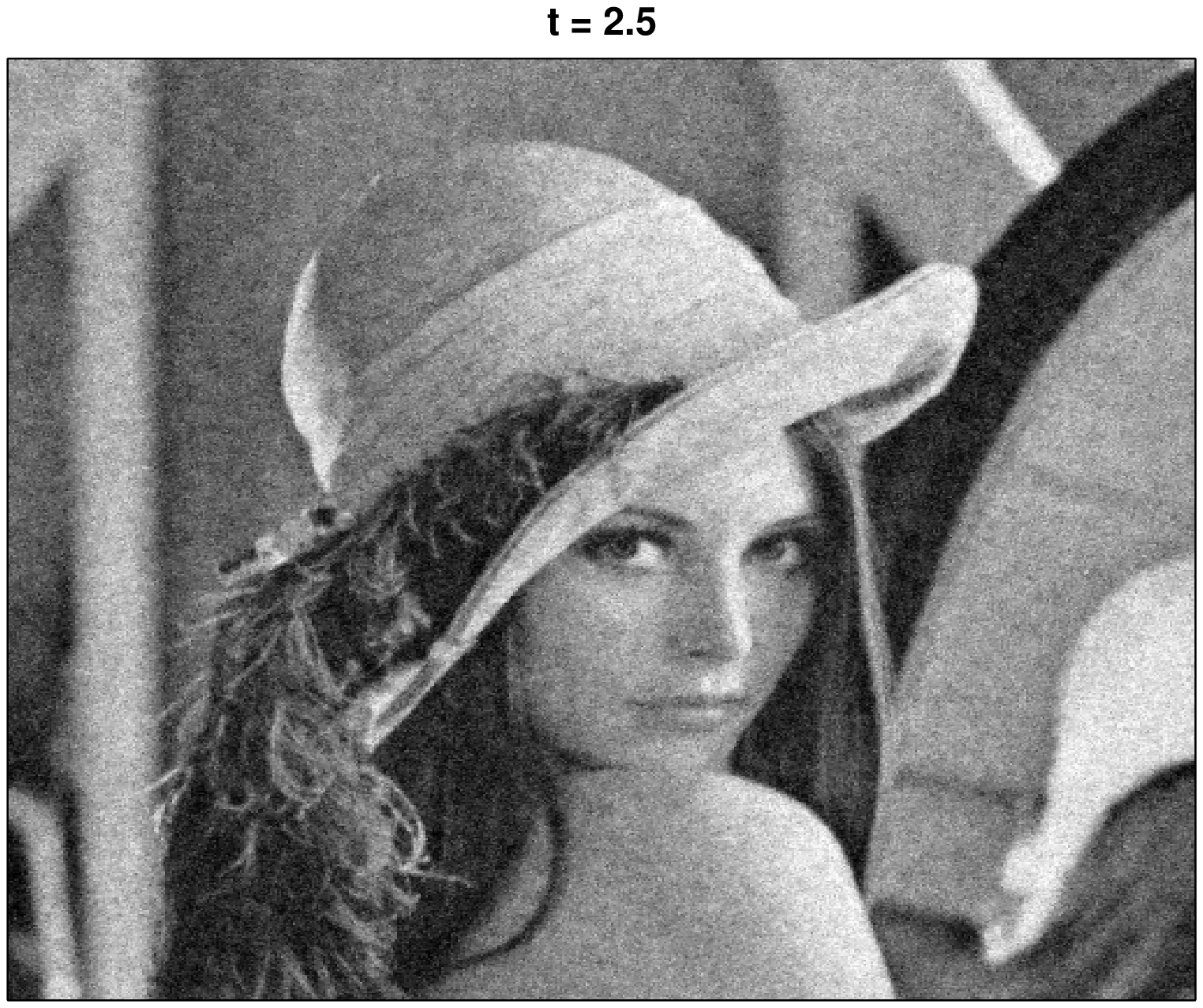}}
\subfigure
{\includegraphics[width=8.2cm,height=8.2cm]{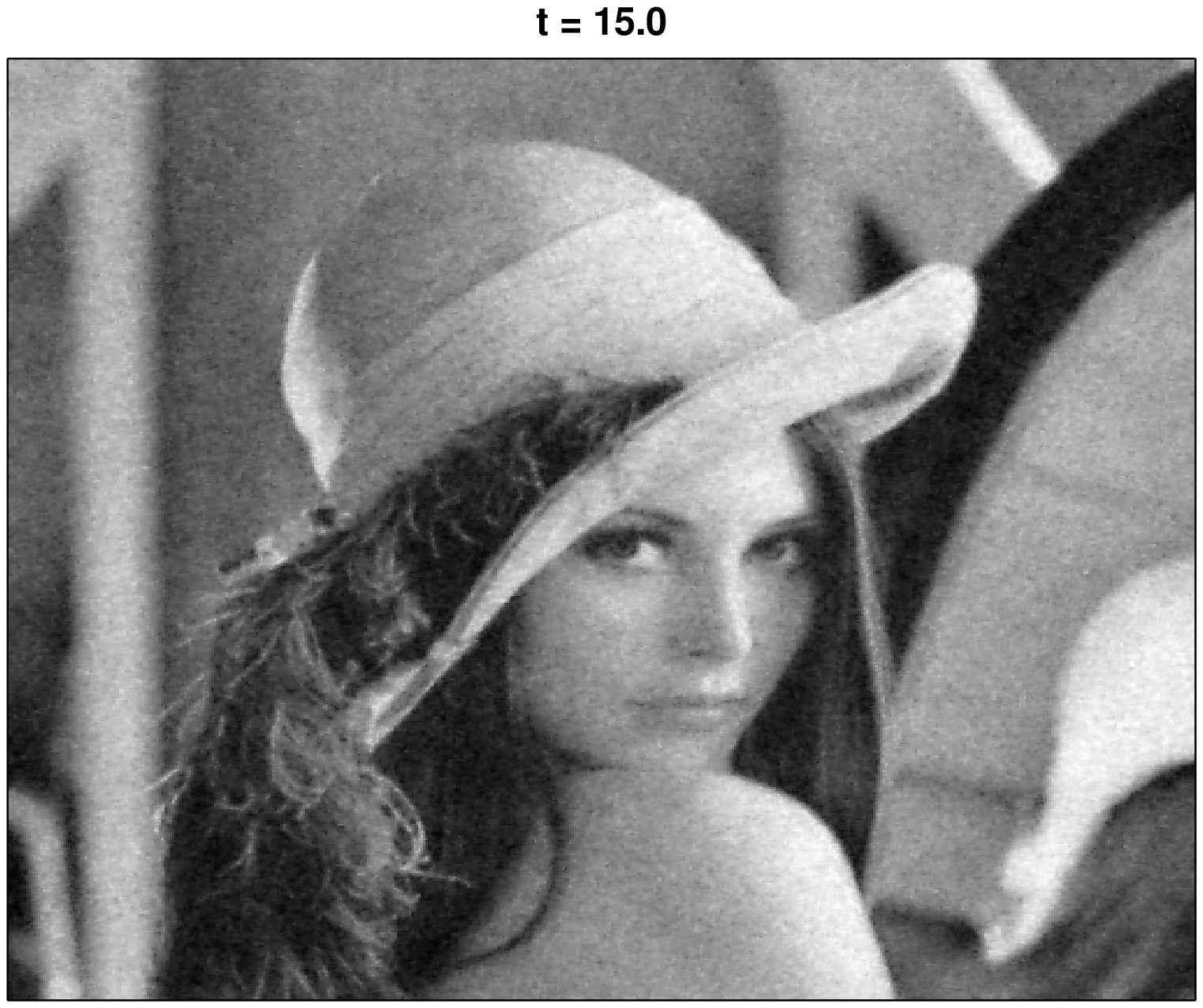}}
\subfigure
{\includegraphics[width=8.2cm,height=8.2cm]{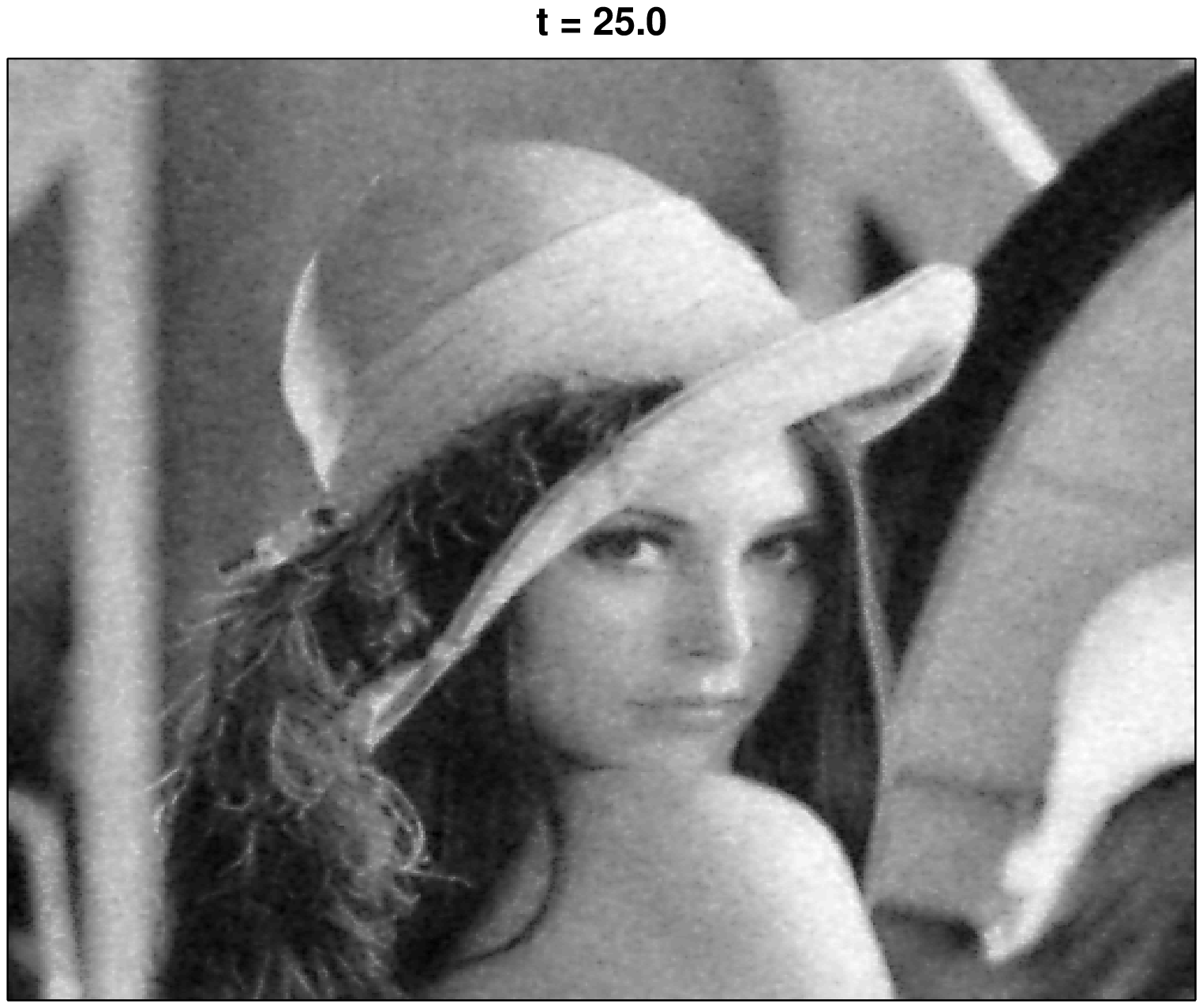}}
\caption{First component of the solution of (\ref{cd35}) at times $t=2.5, 15, 25$ with NCDF4.}
\label{figRII_5a}
\end{figure}
\begin{figure}[htbp]
\centering
\subfigure
{\includegraphics[width=8.2cm,height=8.2cm]{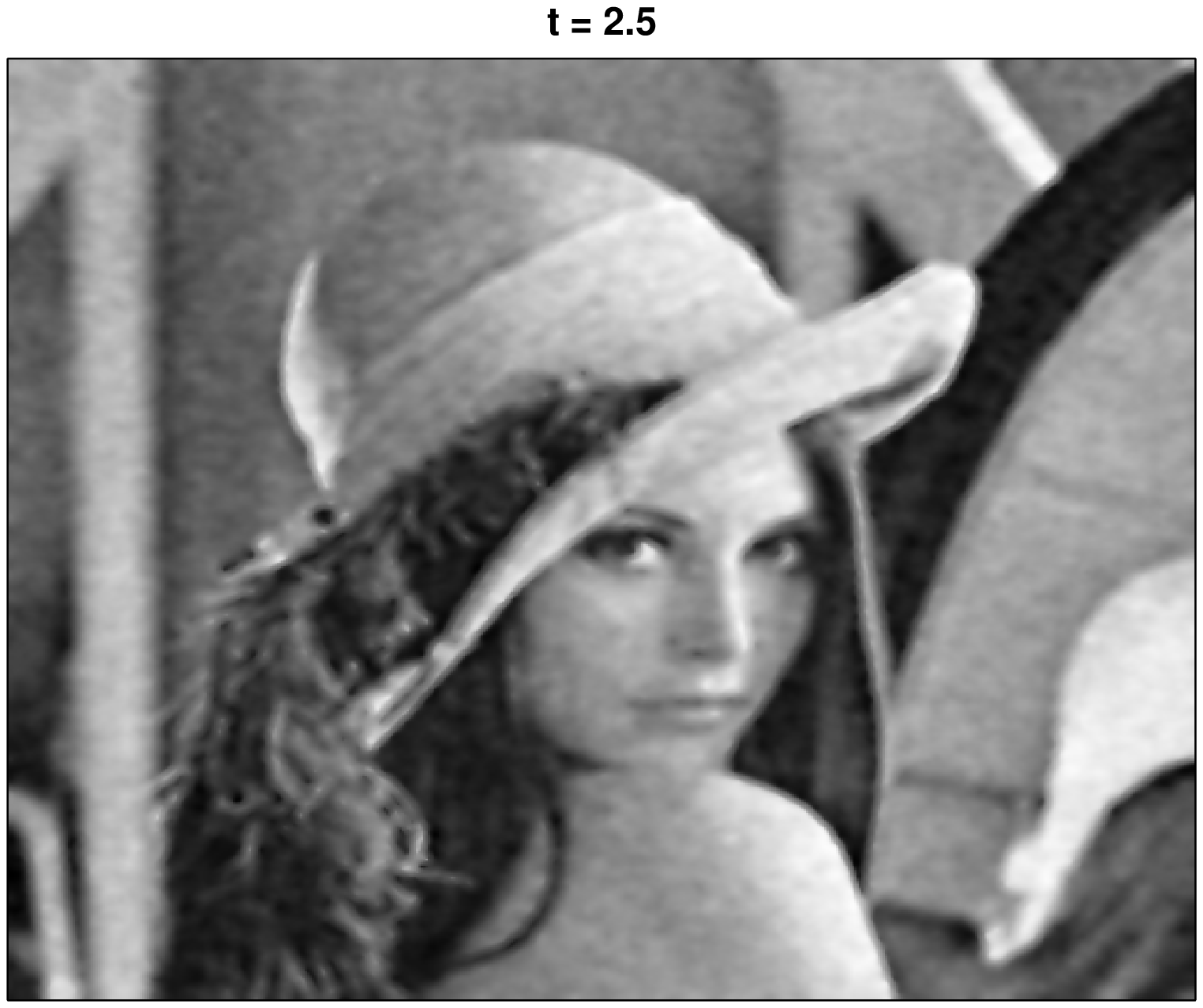}}
\subfigure
{\includegraphics[width=8.2cm,height=8.2cm]{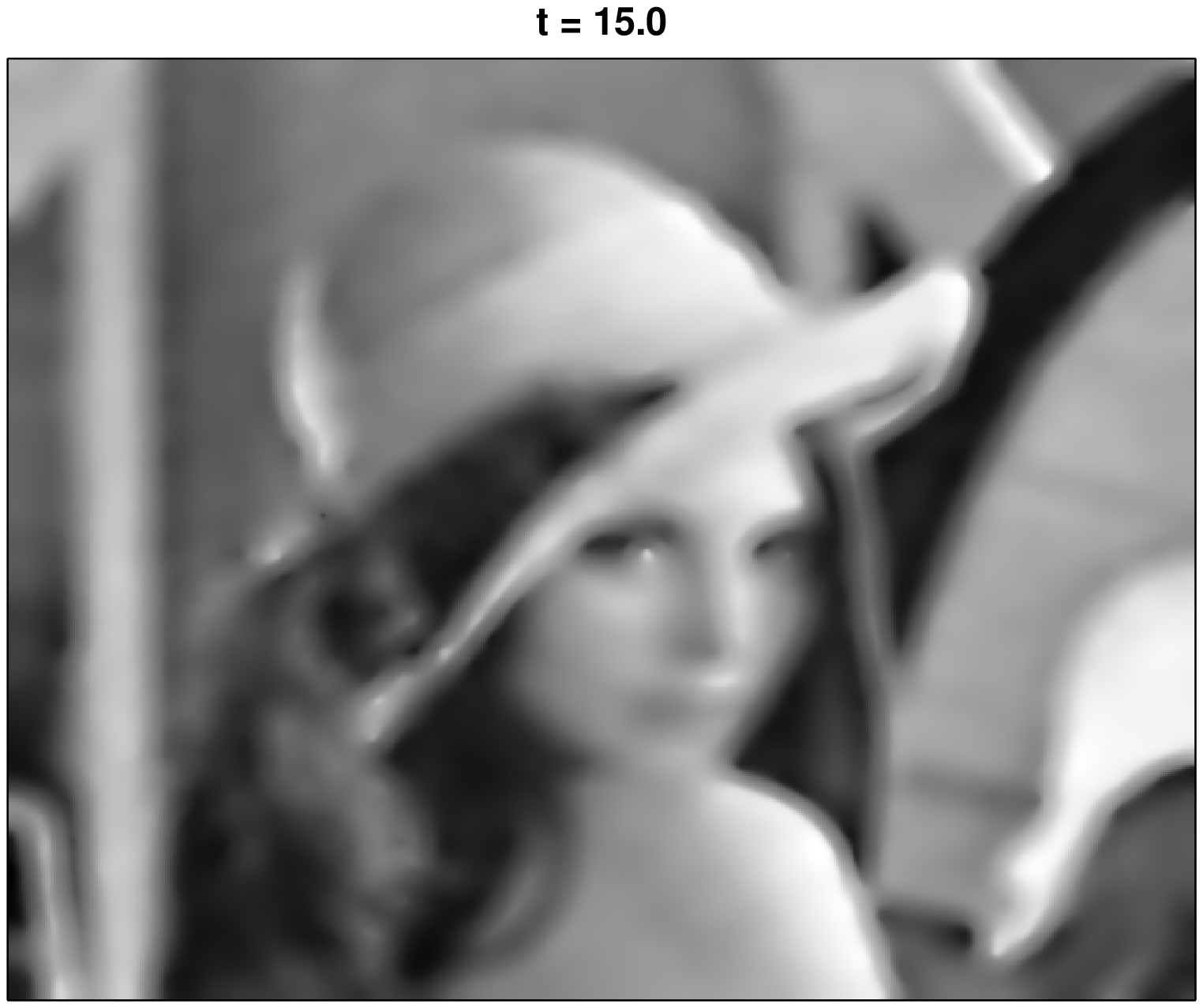}}
\subfigure
{\includegraphics[width=8.2cm,height=8.2cm]{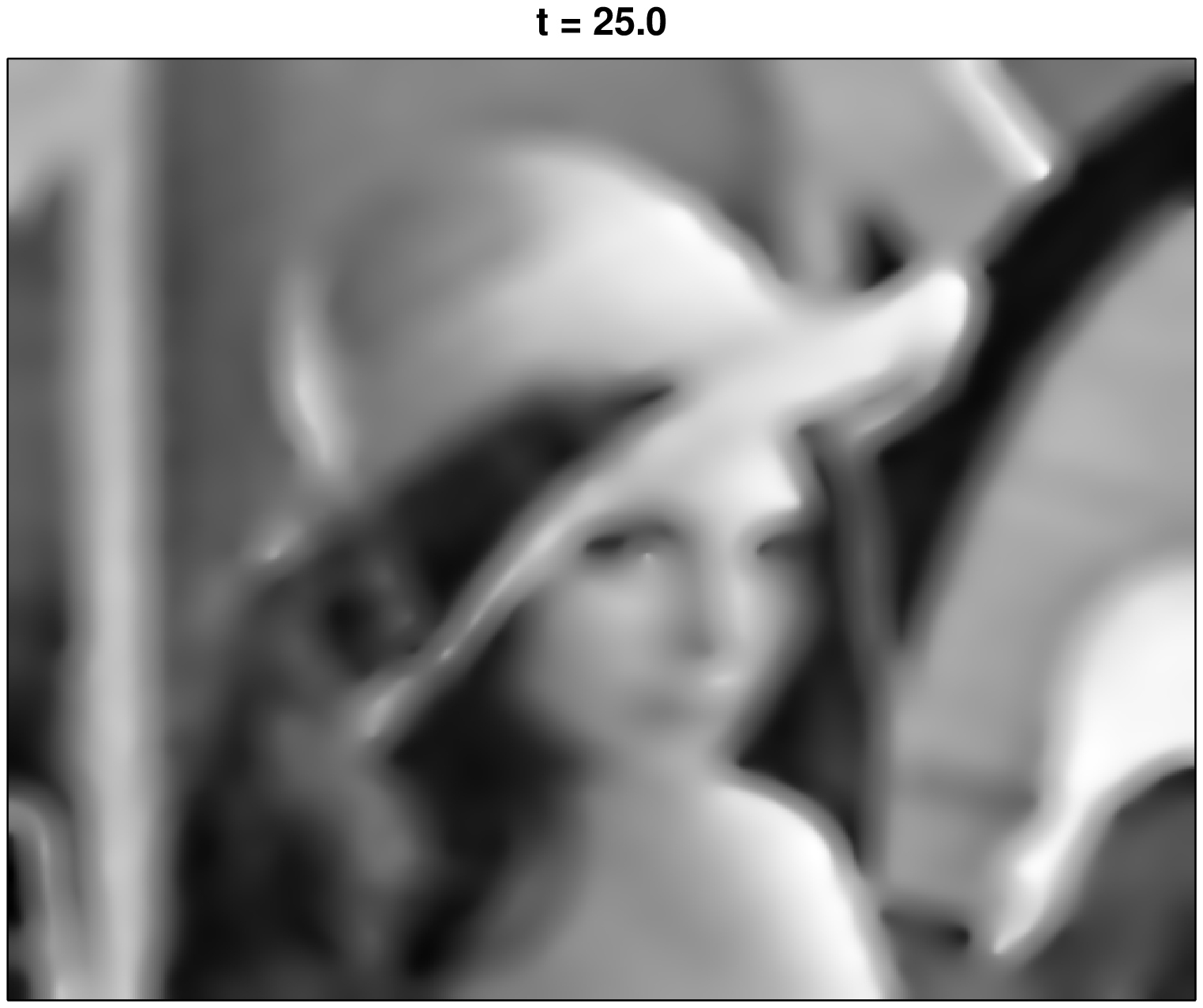}}
\caption{First component of the solution of (\ref{cd35}) at times $t=2.5, 15, 25$ with NCDF5.}
\label{figRII_5b}
\end{figure}
The comparison between the solutions of (\ref{cd35}) with the three models reveals these differences in a significant way, see Figures \ref{figRII_5a}-\ref{figRII_6b}, where the images corresponding to NCDF4 and NCDF5 at several times are displayed. (The results with NCDF6 are very similar to those of NCDF5.) In the case of the first component (Figures \ref{figRII_5a} and \ref{figRII_5b}), the performance of the models by $t=2.5$ are similar, but at longer times NCDF4 delays the blurring and leads to a restored image with better quality. This control of the diffusion is confirmed in {Figures \ref{figRII_7}-\ref{figRII_9}}, which show, for the three models, the time evolution of the NPB metric (right) and the corresponding first component of the solution of (\ref{cd35}) at the time for which the SNR value is maximum (left). (In each case this time corresponds to the iteration of the numerical scheme associated to the small circle in the figure on the right.) The reduction of the edge spreading is also observed in the detection of the edges by using the second components, see Figures \ref{figRII_6a} and \ref{figRII_6b}. The evolution of the NPB curve for NCDF4 implies the best quality in terms of blur perception, among the three models.
\begin{figure}[htbp]
\centering
\subfigure
{\includegraphics[width=8.2cm,height=8.2cm]{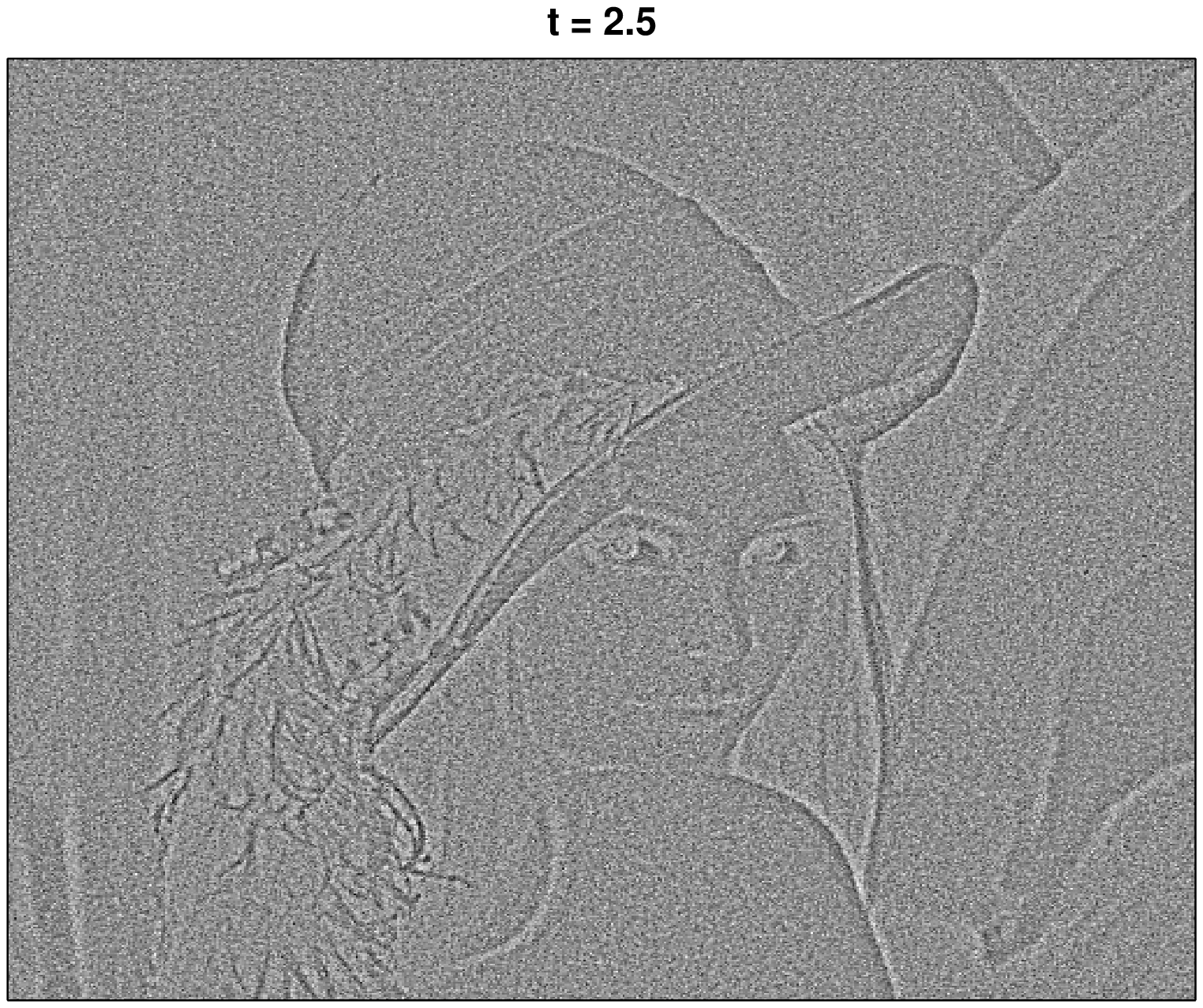}}
\subfigure
{\includegraphics[width=8.2cm,height=8.2cm]{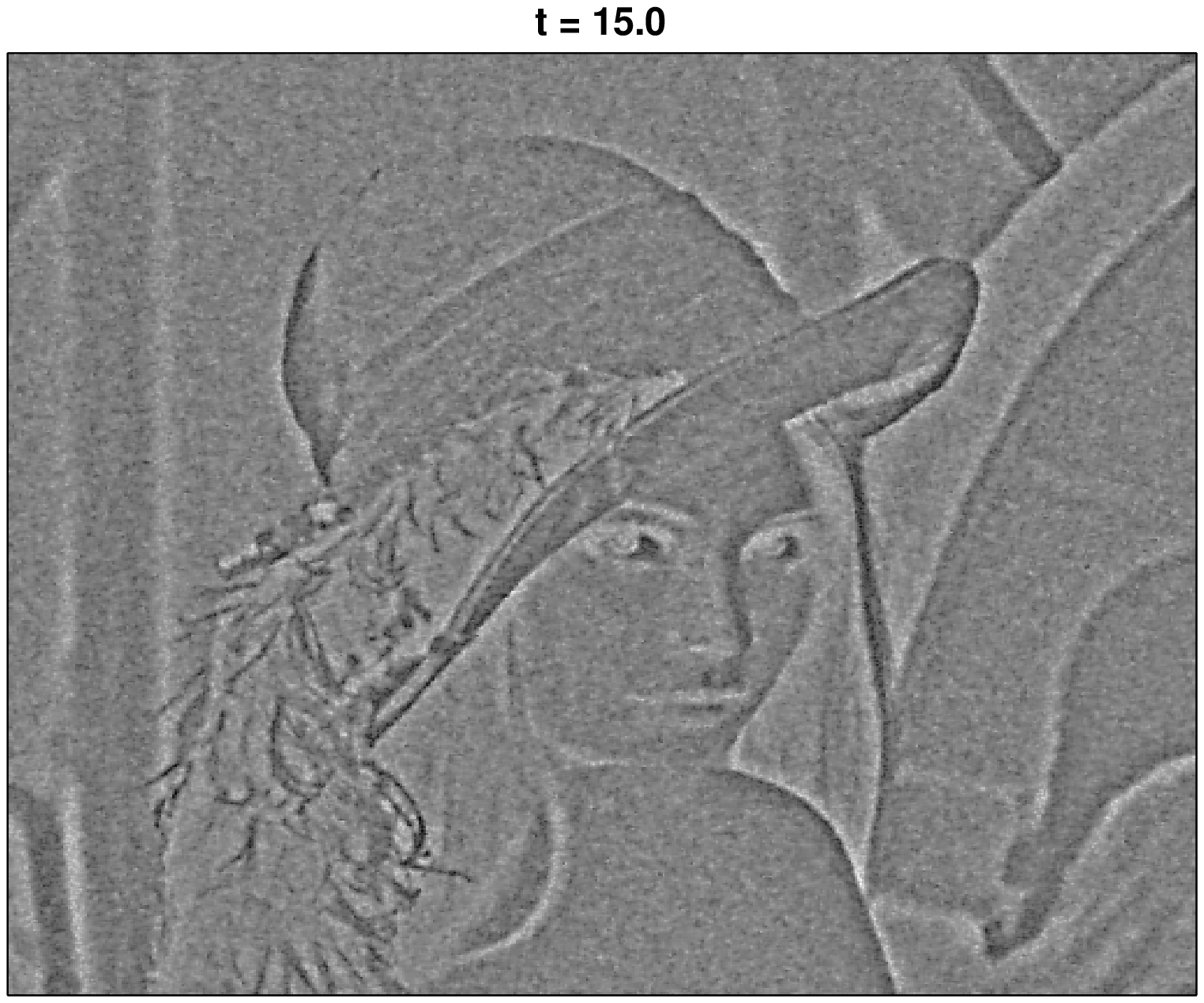}}
\subfigure
{\includegraphics[width=8.2cm,height=8.2cm]{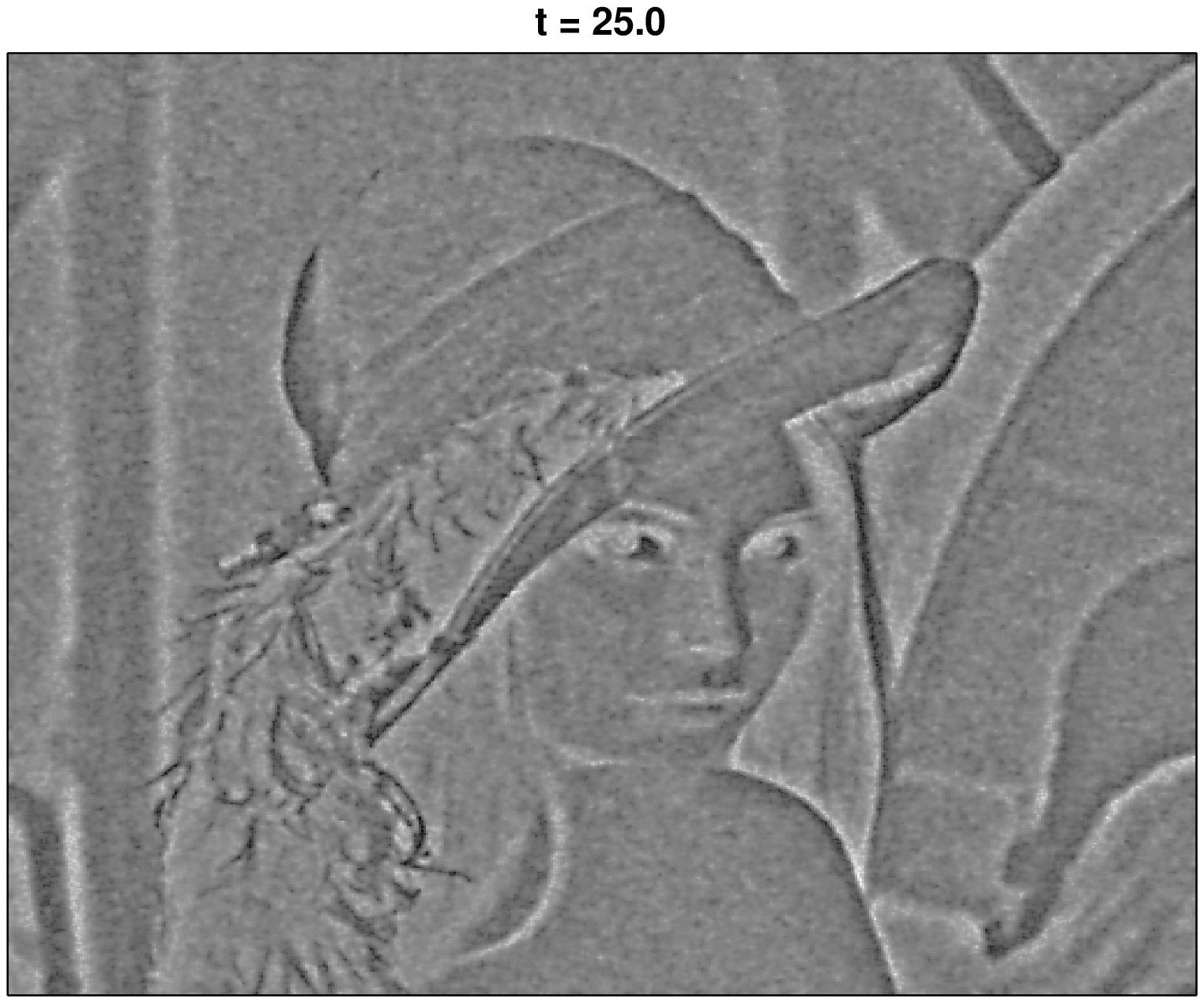}}
\caption{Second component of solution of (\ref{cd35}) at times $t=2.5, 15, 25$ with NCDF4.}
\label{figRII_6a}
\end{figure}
\begin{figure}[htbp]
\centering
\subfigure
{\includegraphics[width=8.2cm,height=8.2cm]{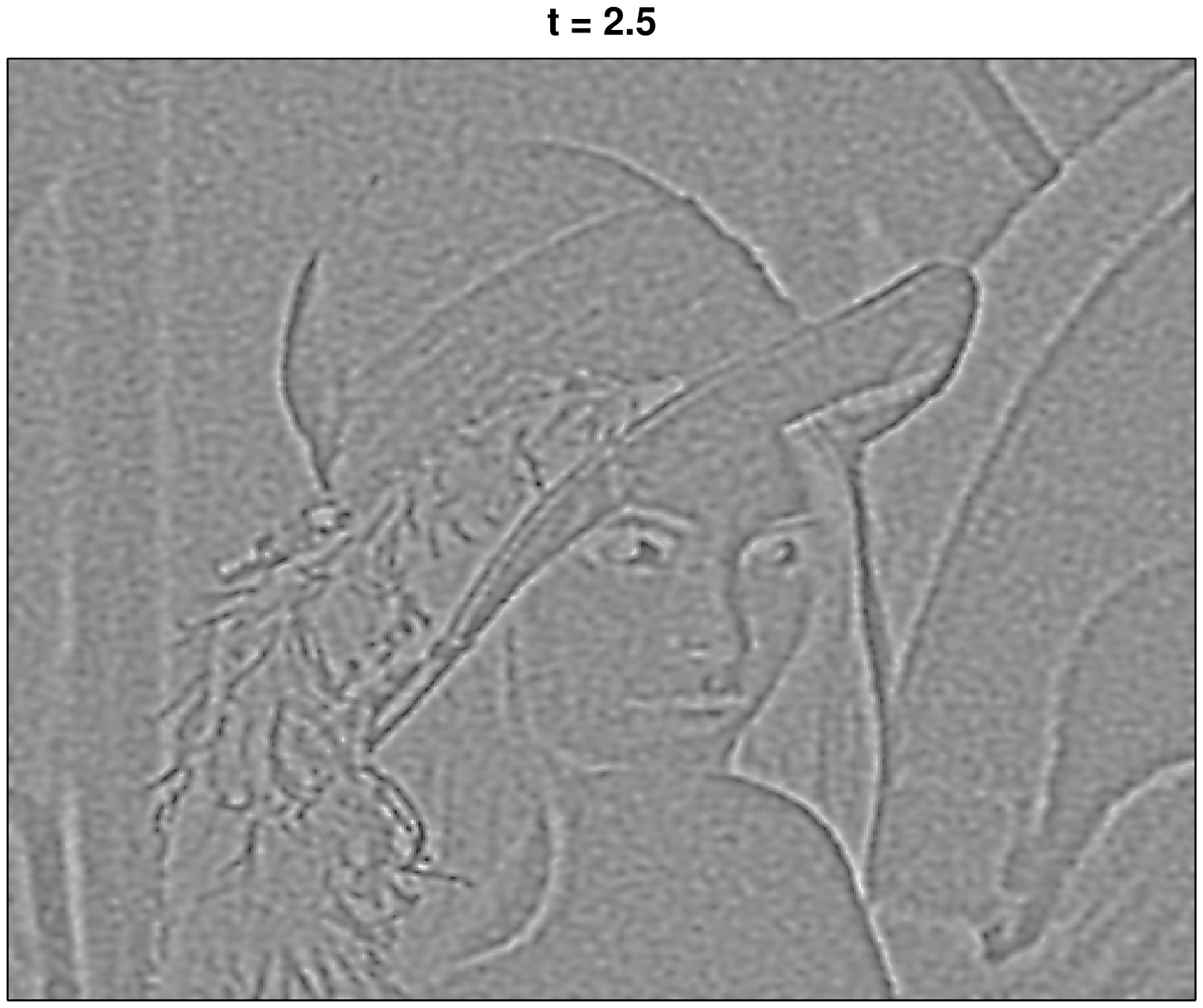}}
\subfigure
{\includegraphics[width=8.2cm,height=8.2cm]{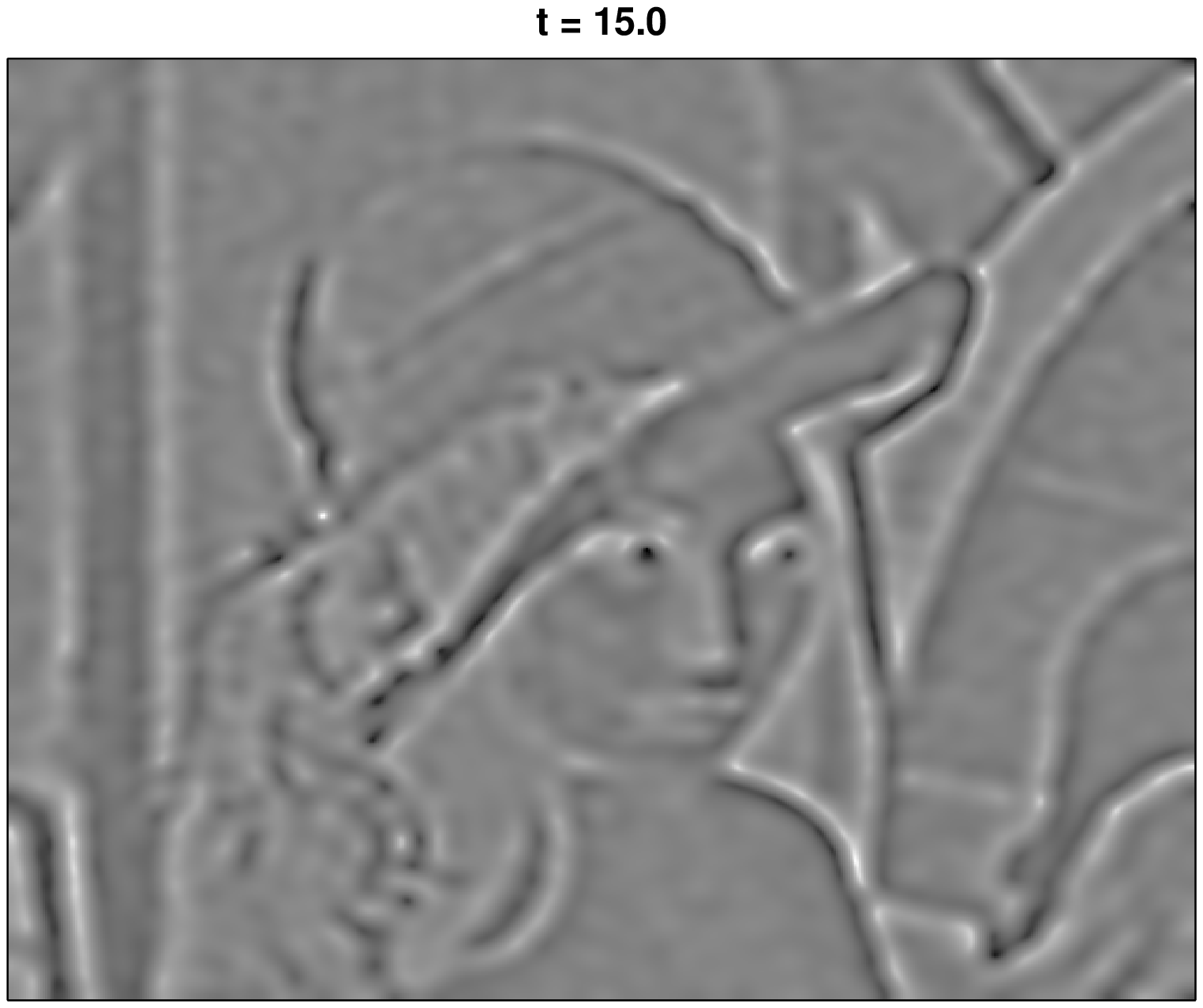}}
\subfigure
{\includegraphics[width=8.2cm,height=8.2cm]{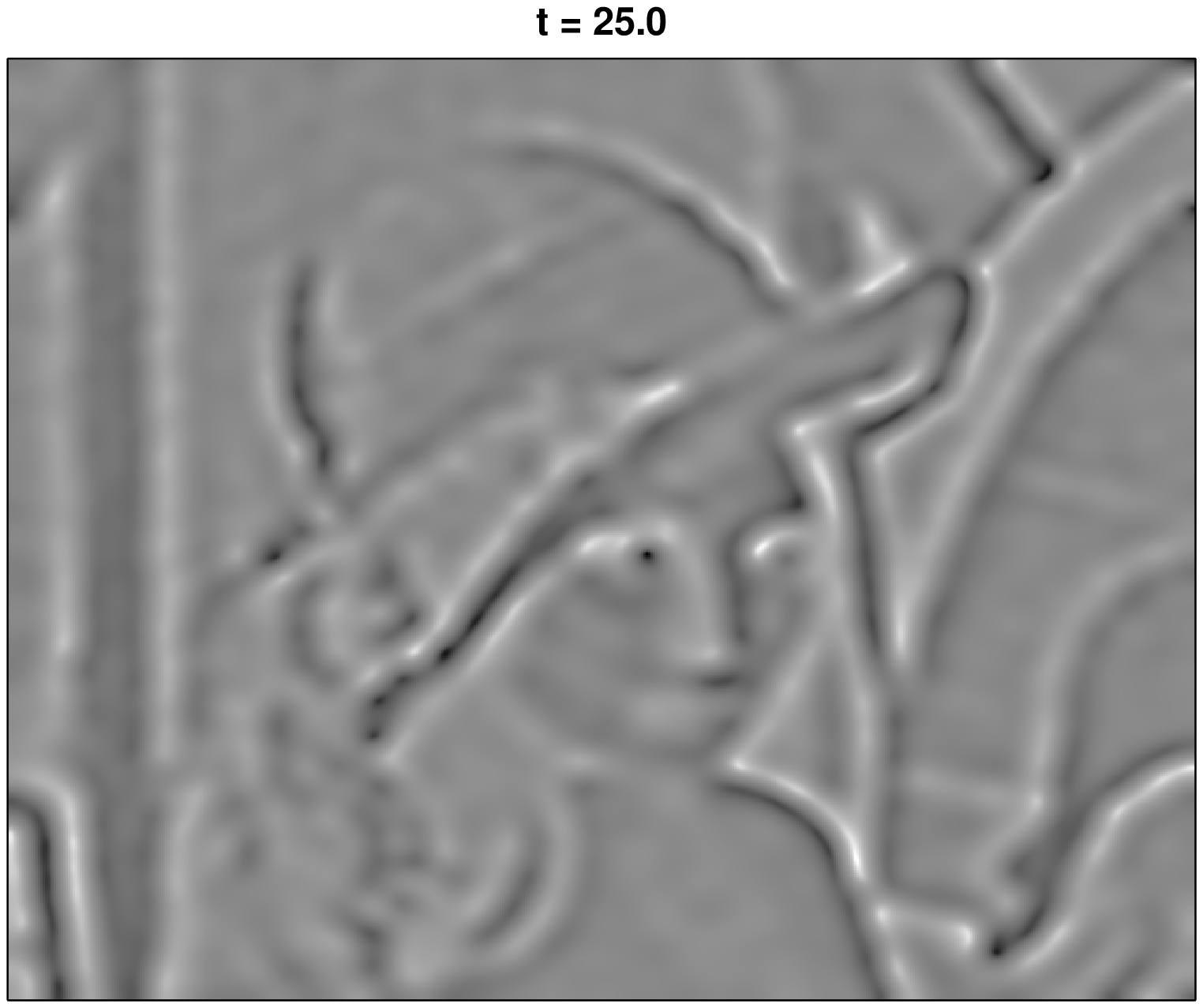}}
\caption{Second component of solution of (\ref{cd35}) at times $t=2.5, 15, 25$ with NCDF5.}
\label{figRII_6b}
\end{figure}

\begin{figure}
\centering
\subfigure[]
{\includegraphics[width=8.5cm,height=7cm]{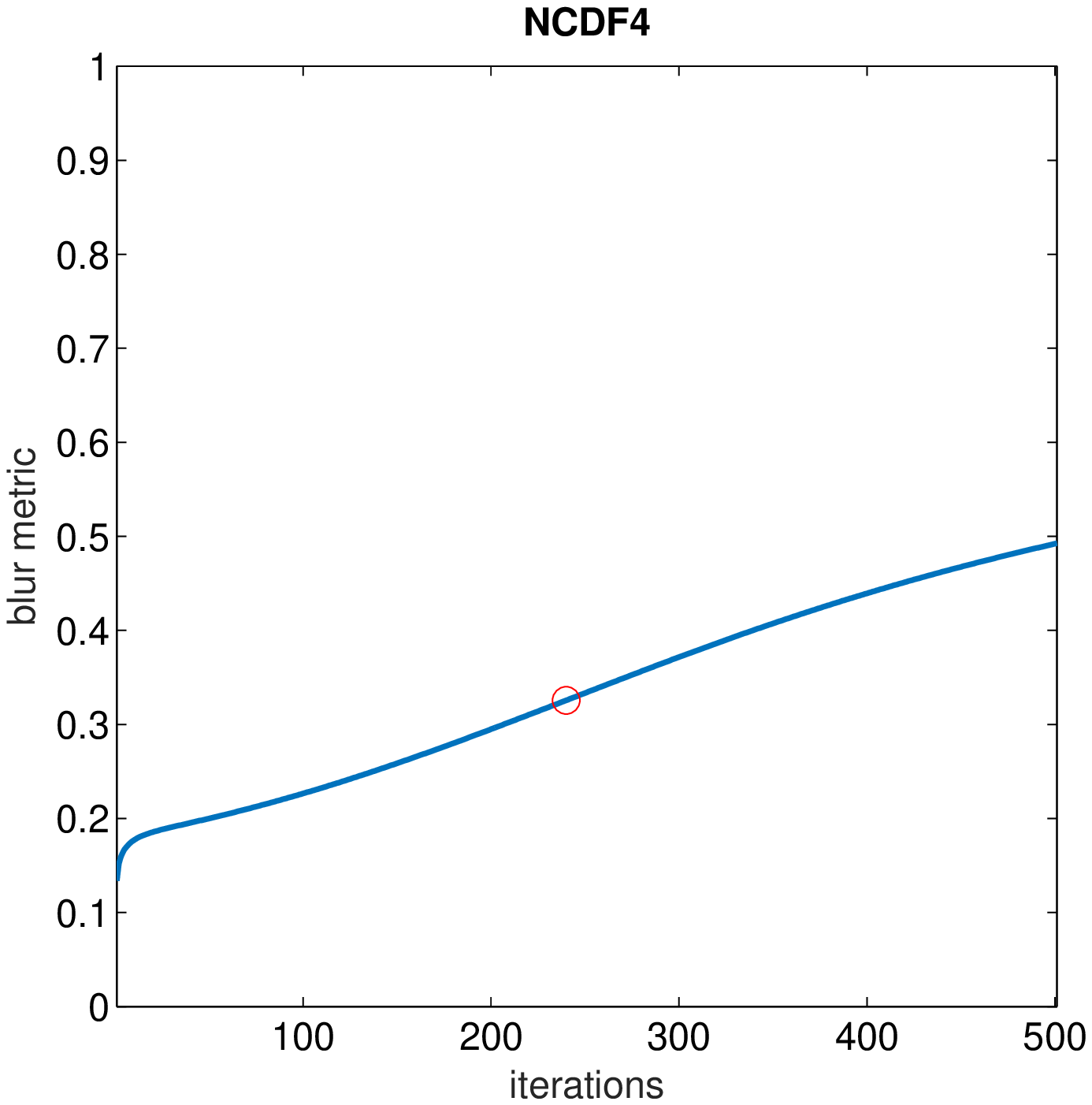}}
\centering
\subfigure[]
{\includegraphics[width=8.5cm,height=8.5cm]{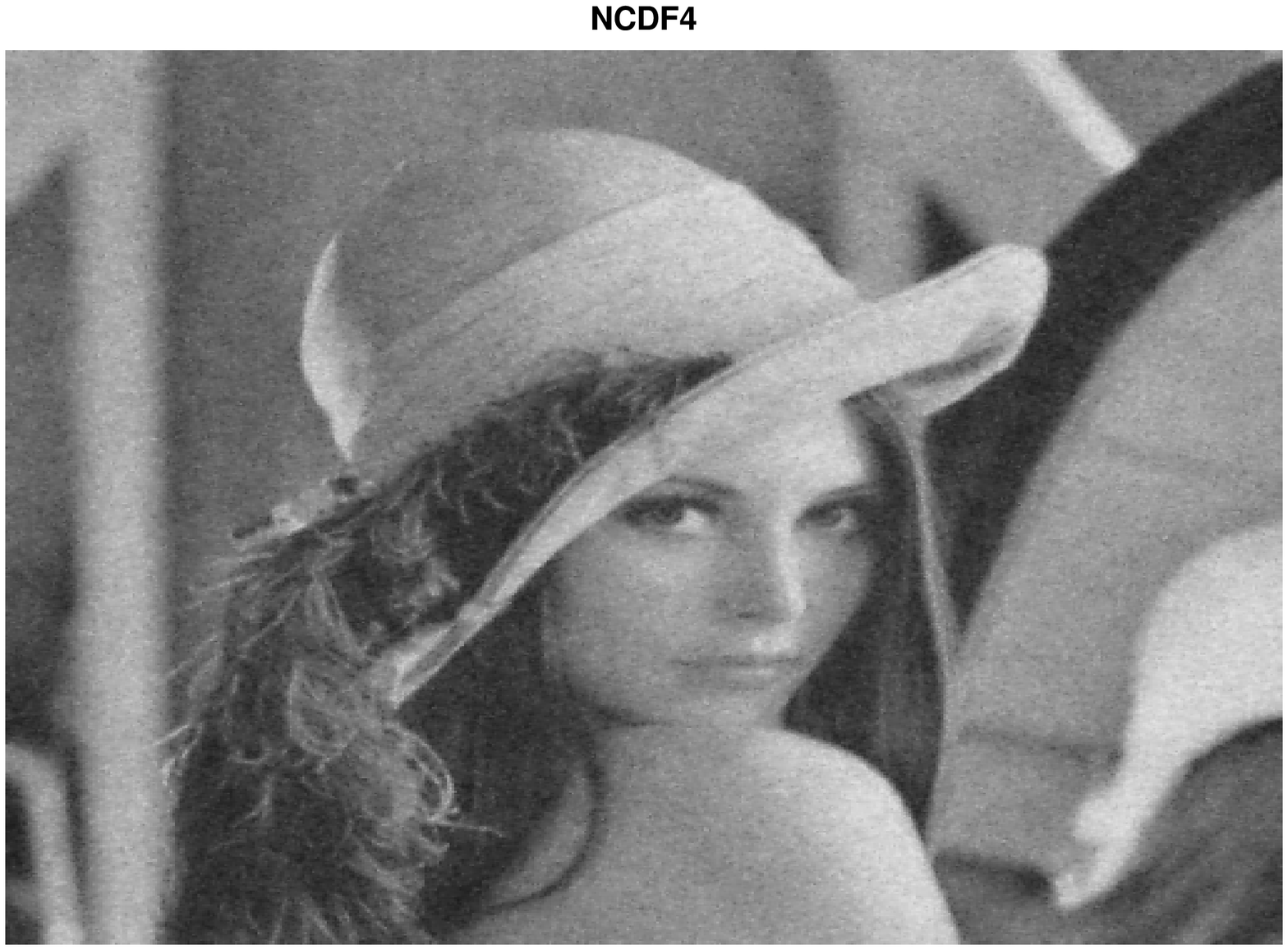}}
\caption{NCDF4: (a) NPB values vs time and (b) first component of the solution of (\ref{cd35}) at the time marked by the small circle.}
\label{figRII_7}
\end{figure}

\begin{figure}[htbp]
\centering
\subfigure[]
{\includegraphics[width=8.5cm,height=7cm]{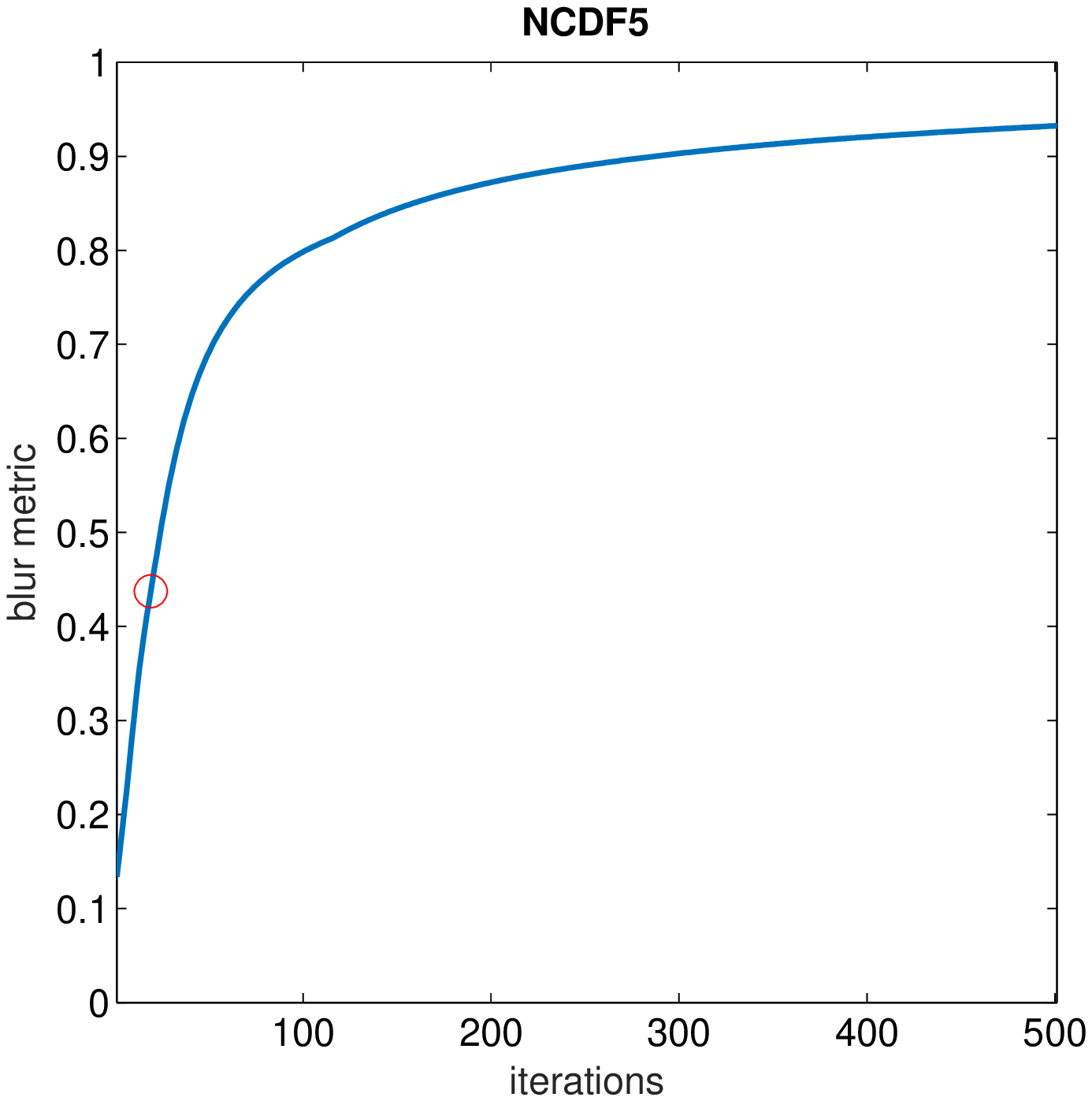}}
\subfigure[]
{\includegraphics[width=8.5cm,height=8.5cm]{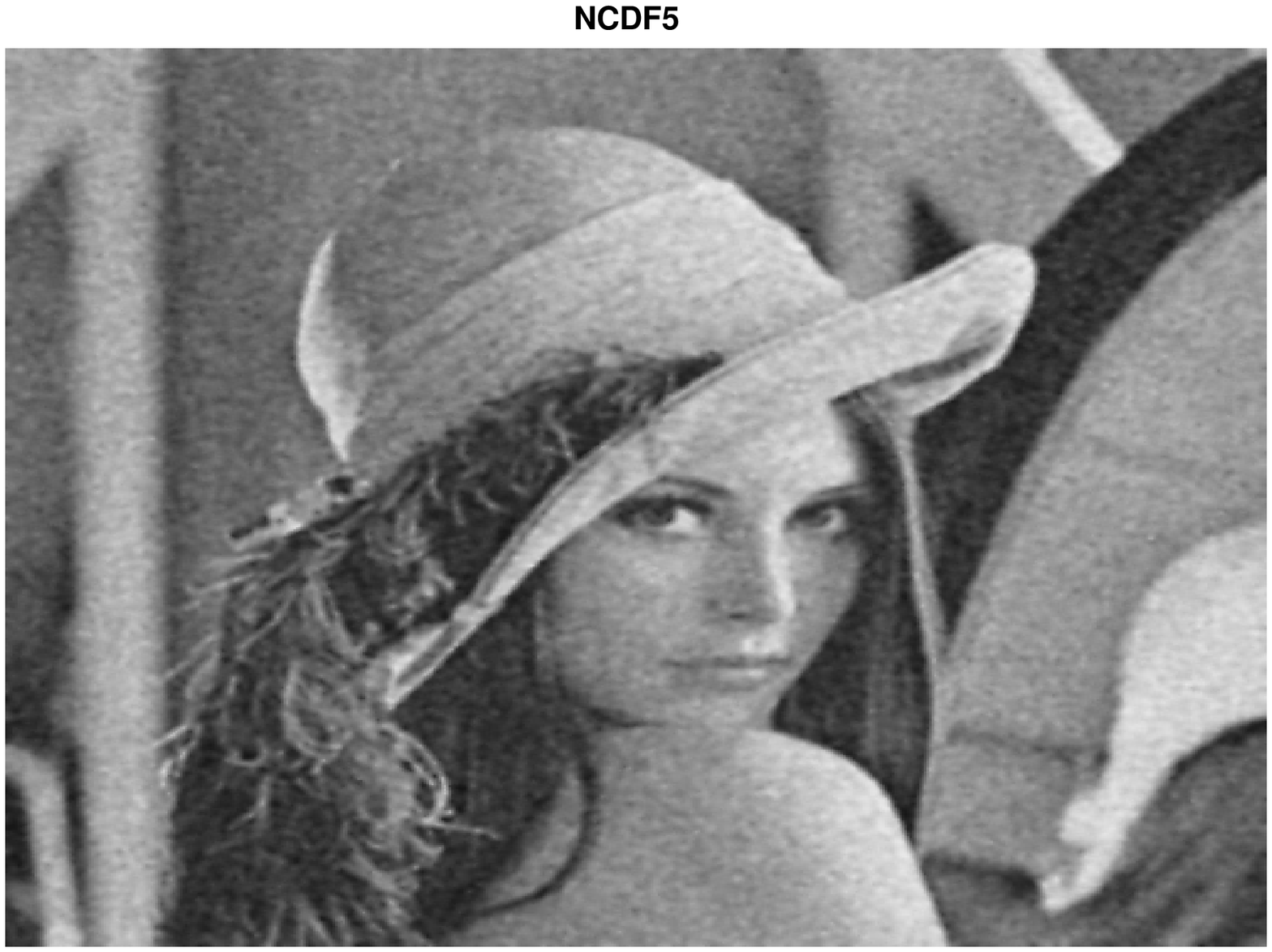}}
\caption{NCDF5: (a) NPB values vs time and (b) first component of the solution of (\ref{cd35}) at the time marked by the small circle.}
\label{figRII_8}
\end{figure}

\begin{figure}[htbp]
\centering
\subfigure[]
{\includegraphics[width=8.5cm,height=7cm]{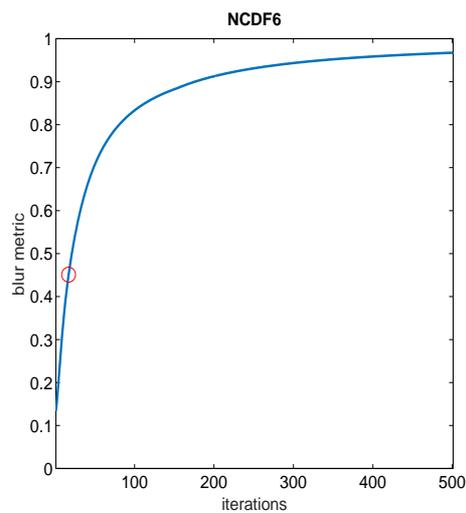}}
\subfigure[]
{\includegraphics[width=8.5cm,height=8.5cm]{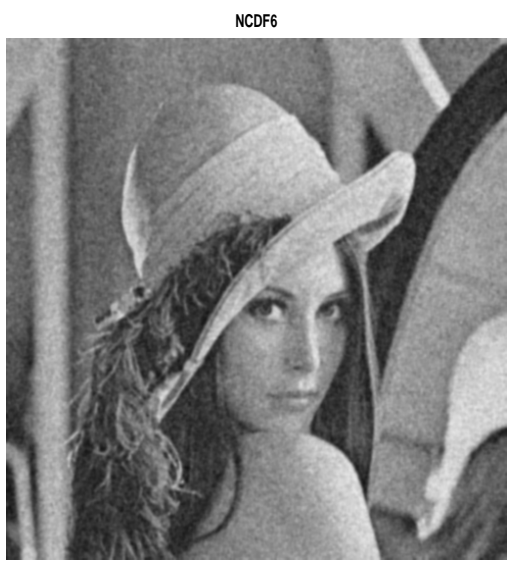}}
\caption{NCDF6: (a) NPB values vs time and (b) first component of the solution of (\ref{cd35}) at the time marked by the small circle.}
\label{figRII_9}
\end{figure}

\section{Concluding remarks}
\label{sec:sec4}
In the present paper nonlinear cross-diffusion systems as mathematical models for image filtering are studied. This is a continuation of the companion paper, Ara\'ujo et al. \cite{ABCD2016}, devoted to the linear case. Here the nonlinearity is introduced through $2\times 2$, uniformly positive definite cross-diffusion coefficient matrices with bounded, globally Lipschitz entries. In the first part of the paper well-posedness of the corresponding IBVP with Neumann boundary conditions is proved, as well as several scale-space properties and the limiting behaviour to the constant average grey value of the image at infinity. The second part is devoted to some numerical comparisons on the performance of the filtering process from some noisy images using three  models distinguished by different choices of the cross-diffusion matrix.
 The computational part 
does not intent to be exhaustive and instead aims to suggest and anticipate some preliminary conclusions that may motivate further research. As in the linear case, the systems incorporate some degrees of freedom. This diversity is mainly represented by the choice of the cross-diffusion matrix. The numerical study performed here makes use of cross-diffusion matrices whose derivation was based on the choices made in Gilboa et al.  \cite{GilboaSZ2004} for the complex diffusion case, combined with the results on linear cross-diffusion. The numerical results reveal that the structure of the diffusion coefficients affects the evolution of the filtering process and the quality in the detection of the edges through one of the components of the system.

Additional lines of future research concern the extension of the cross formulation to study edge-enhancing problems as well as the introduction and analysis of discrete cross-diffusion systems,  as discrete models for image filtering and as schemes of approximation to the continuous problem.

\begin{acknowledgements}
This work was supported by  Spanish Ministerio de Econom\'{\i}a y Competitividad under the Research Grant MTM2014-54710-P.
A. Ara\'ujo and S. Barbeiro were also supported by the Centre for Mathematics of the University of Coimbra -- UID/MAT/00324/2013, funded by the Portuguese Government through FCT/MCTES and co-funded by the European Regional Development Fund through the Partnership Agreement PT2020.
\end{acknowledgements}

\end{document}